\documentclass[12pt, reqno]{amsart}
\usepackage{amsmath, amstext, amsbsy, amssymb, amscd}
\usepackage{amsxtra}
\usepackage{amscd}
\usepackage{amsthm}
\usepackage{amsfonts}
\usepackage{eucal}
\usepackage{color}
\usepackage[all]{xy}
\usepackage[CJKbookmarks=true]{hyperref}

\usepackage{amsmath}
\usepackage{amsfonts}
\usepackage{amssymb}
\setlength{\textheight}{8.6in} \setlength{\textwidth}{35pc}
\setlength{\topmargin}{-0.1in} \setlength{\footskip}{0.2in}
\setlength{\oddsidemargin}{.573125pc}
\setlength{\evensidemargin}{\oddsidemargin}

\newtheorem{theorem}{Theorem}[section]
\newtheorem{lemma}[theorem]{Lemma}
\newtheorem{prop}[theorem]{Proposition}
\newtheorem{corollary}[theorem]{Corollary}
\theoremstyle{definition}

\newtheorem{remark}[theorem]{Remark}

\numberwithin{equation}{section}

\def\Ann{\mathrm{Ann}}

\def\ad{\mathrm{ad}}

\def\End{\mathrm{End}}
\def\Ch{\mathrm{Ch}}

\def\Lie{\mathrm{Lie}}

\def\fin{\mathrm{fin}}

\def\gr{\mathrm{gr}}

\def\Prim{\mathrm{Prim}}

\def\ggg{\mathfrak{g}}

\makeatletter
\newcommand{\rmnum}[1]{\romannumeral #1}
\newcommand{\Rmnum}[1]{\expandafter\@slowromancap\romannumeral #1@}
\makeatother

{\vskip-\lastskip\medskip
  \noindent
  {\em #1.}\enspace
  }%
{\qed\par\medskip
  }

\begin{document}

\title[On finite dimensional  representations  of finite W-superalgebras ]{On finite dimensional representations of finite W-superalgebras}

\author{Husileng Xiao}

\address{ School of mathematical Science, Harbin Engineering University, Harbin, 150001, China }\email{hslxiao@hrbeu.edu.cn}
\begin{abstract}
We first formulate and prove a version of Premet's conjecture for  finite W-superalgebras associated to basic Lie superalgebras. As in the case of W-algebras,  Premet's conjecture is very close to giving a classification of finite dimensional simple modules of finite W-superalgebras.
In the case of basic type \Rmnum{1} Lie superalgebras, 
we classify the finite dimensional simple supermodules  with integral central character and give an algorithm 
 to compute their characters based on the $\ggg_{\bar{0}}$-rough structure of $\ggg$-modules. 
\end{abstract}
\maketitle
\section{Introduction}

\let\thefootnote\relax\footnotetext{ Keywords : W-superalgebras, Premet's conjecture, finite dimensional representations}
\let\thefootnote\relax\footnotetext{ MSC: 17B10 17B63 17B69}

Finite W-superalgebras are the Zhu algebras of affine W-superalgebras in the sense of \cite{DSK}.
The latter includes the well-known $N=1,2,3,4$ superconformal algebras and plays a very important role in supersymmetric quantum field theory.
The affine W-superalgebras were constructed in \cite{KRW} by quantum Hamiltonian reduction in the general setting. 
However, the finite W-superalgebras appear in mathematics more indirectly.
Generalizing the groundbreaking work \cite{Pr1},  Wang and Zhao \cite{Zh,WZ} first studied finite W-superalgebras from the viewpoint of modular Lie superalgebras. Here the term modular means that the ground algebraically closed field has a positive characteristic.

Let $\ggg=\ggg_{\bar{0}}+\ggg_{\bar{1}}$ be a basic Lie superalgebra, $\mathcal{W}_0$ (resp. $\mathcal{W}$) be the finite W-(resp. super-)algebra constructed from a fixed  nilpotent element  $ e \in \ggg_{\bar{0}}$. Based on a relation between the finite W-algebra $\mathcal{W}_0$ and W-superalgebra $\mathcal{W}$  found recently by the author and Shu,  we study the finite dimensional irreducible representations of finite W-superalgebras in this paper. Let $\mathrm{Irr}^{\mathrm{fin}}(\mathcal{W})$ stand for the set of isomorphism classes of irreducible modules.

   Brown, Brundan and Goodwin \cite{BBG,BG} gave  a Yangian presentation of W-superalgebras corresponding to  principal nilpotent elements in the general linear Lie superalgebras.
Relying on this explicit presentation, they gave a description of $\mathrm{Irr}^{\mathrm{fin}}(\mathcal{W})$ and further detailed information on their highest weight structures.  Poletaeva and Serganova  \cite{PS1} proved an Amitsur-Levitzki identity for the  W-superalgebras associated to  principal nilpotent elements in the queer Lie superalgebras. They obtained that any irreducible
representation is finite dimensional. These results indicate that the representation theory of  finite W-superalgebras is quite different from that of finite W-algebras.
By giving an explicit description of the structure of  W-superalgebras associated to  minimal nilpotent elements,  Zeng and Shu \cite{ZS} constructed their irreducible representations with  dimension 1 or 2. Recently, Chen \cite{Ch} investigated the Whittaker category $\mathcal{N}$ for  basic Lie superalgebras. Through Skryabin's equivalence, the category $\mathcal{N}$  is equivalent to $\mathcal{W}$-Mod  when $\mathcal{W}$ is associated to a principal nilpotent $e$.

 However, unlike  the case of finite W-algebras,  some fundamental problems in the representation theory of finite W-superalgebras are still open in general. 
 In \cite{SX}, Shu and the author generalized  Losev's  Poisson geometric approach to the super case and made a  step to give a classification of finite dimensional irreducible representations of  finite W-superalgebras in a more general setting. In this article we make a progress to this problem  by proving  Premet's  conjecture for the W-superalgebras of basic Lie superalgebras. In particular, we classify the finite dimensional simple $\mathcal{W}$-supermodules with integral central character and obtain an algorithm  to compute their characters in the basic type \Rmnum{1} case.
 
 We hope that the readers could be convinced that the difference between finite W-algebras and  W-superalgebras probably not exceeds that between  Lie algebras and Lie superalgebras.

\subsection{Premet's conjecture for finite W-superalgebras}\label{sec1.1}
Let $\ggg=\ggg_{\bar{0}}\oplus \ggg_{\bar{1}}$ be a basic Lie superalgebra over an algebraically closed field $\mathbb{K}$ with $\mathrm{Char}(\mathbb{K})=0$, $\mathcal{U}$ and $\mathcal{U}_{0}$ be the enveloping algebra of $\mathfrak{g}$ and $\ggg_{\bar{0}}$ respectively. Denote by $(\bullet, \bullet)$ the Killing form on $\ggg$. Fix a nilpotent $e \in \ggg_{\bar{0}}$ and  let $\chi \in \ggg_{\bar{0}}^{*}$ be the corresponding element to $e$ via the Killing form. Pick an $\mathfrak{sl}_2$-triple $\{ f,h,e \} \subset \ggg_{\bar{0}}$ and let $\ggg=\bigoplus_{i}\ggg(i)$ (resp. $\ggg_{\bar{0}}=\bigoplus_{i}\ggg(i)\cap\ggg_{\bar{0}}$) be the $\mathbb{Z}$-grading given by
the adjoint action of $h$. Denote by $\mathcal{W}$ and $\mathcal{W}_{0}$ the W-algebras associated to the pairs $(\ggg,e)$ and $(\ggg_{\bar{0}},e)$ respectively.  Let $\tilde{\mathcal{W}}$ be the extended W-superalgebra  $\mathcal{A}_{\ddag}$ defined in  \cite[\S3]{SX}(Note that it is denoted by $\mathcal{A}_{\dag}$ in \cite[\S6]{Lo15}).
The following relation among the three kinds of W-algebras was found in \cite{SX}:  (1) we have an embedding
$\mathcal{W}_{0} \hookrightarrow \tilde{\mathcal{W}}$ and the latter is generated over the former by $\dim(\ggg_{\bar{1}})$ odd elements;
(2) we have an isomorphism $\tilde{\mathcal{W}}\simeq \mathrm{Cl}(V_{\bar{1}})\otimes \mathcal{W}$ of associative algebras, where $\mathrm{Cl}(V_{\bar{1}})$ is the Clifford algebra over a vector space $V_{\bar{1}}$ with a non-degenerate symmetric bilinear form; see Theorem \ref{them1} for the details.
 Essentially, as mentioned  in \cite{SX},  this  makes $\mathcal{W}_{0}$  to play a role in the representation theory of  $\mathcal{W}$ as $\mathcal{U}_{0}$ does in that of $\mathcal{U}$. The 
 representation theories of  $\mathcal{W}$ and $\tilde{\mathcal{W}}$ are equivalent; see Proposition \ref{prop2.5}. However, as we will see in the present work,
 a significant advantage to consider  $\tilde{\mathcal{W}}$ instead of $\mathcal{W}$ is that it is easy to relate $\tilde{\mathcal{W}}$ with $\mathcal{W}_{0}$.
 This enables us to use results on $\mathcal{W}_{0}$.

Given an associative algebra $\mathcal{A}$, we denote by $\mathfrak{id}(\mathcal{A})$  the set of two-sided ideals of $\mathcal{A}$ and by $\mathrm{Prim}^{\mathrm{fin}}(\mathcal{A})$  the set of primitive ideals of $\mathcal{A}$ with finite codimension. It is well known that $\mathrm{Prim}^{\mathrm{fin}}(\mathcal{A})$ is bijective with the set $\mathrm{Irr}^{\mathrm{fin}}(\mathcal{A})$ of isomorphism classes of finite dimensional irreducible $\mathcal{A}$-modules.
 Losev \cite{Lo10}  constructed an ascending map $\bullet^{\dag} : \mathfrak{id}(\mathcal{W}_{0}) \longrightarrow \mathfrak{id}(\mathcal{U}_{0})$ and a descending map $\bullet_{\dag} : \mathfrak{id}(\mathcal{U}_{0}) \longrightarrow \mathfrak{id}(\mathcal{W}_{0})$. These two maps are crucial to his study on the representations of $\mathcal{W}_{0}$.
The ascending map $\bullet^{\dag}$ sends $\mathrm{Prim}^{\mathrm{fin}}(\mathcal{W}_{0})$ to the set $\mathrm{Prim}_{\mathbb{O}}(\mathcal{U}_{0})$ of primitive ideals of $\mathcal{U}_{0}$ supported on the Zariski closure of the adjoint orbit $\mathbb{O}=G_{\bar{0}} \cdot \chi $.
 Denote by  $Q=Z_{G_{\bar{0}}}\{ e,h,f \}$ the stabilizer of the triple $\{ e,h,f \}$ in $G_{\bar{0}}$ under the adjoint action.
 Let $C_{e}=Q/Q^\circ$, where $Q^\circ$ is the identity component of $Q$.  Premet's conjecture which was proved in \cite{Lo11}, states that for any
 $\mathcal{J} \in \mathrm{Prim}_{\mathbb{O}}(\mathcal{U}_{0}) $ the set $\{ \mathcal{I} \mid  \mathcal{I} \in \mathrm{Prim}^{\mathrm{fin}} (\mathcal{W}) ,  \mathcal{I}^{\dag}=\mathcal{J} \}$ is a single $C_{e}$-orbit.
 This indicates to us an  almost complete classification of $\mathrm{Irr}^{\mathrm{fin}}(\mathcal{W}_{0})$.

In this paper, we generalize the above fact to the super case. Recall that the super analog of the maps $\bullet^{\dag}$ and $\bullet_{\dag}$ was established in \cite{SX}. By abuse of notation, we also denote them by $\bullet^{\dag}$ and $\bullet_{\dag}$ from now on.
Denote by $\mathrm{Prim}_\mathbb{O}(\mathcal{U})$ the set of primitive ideals of $\mathcal{U}$ supported on the Zariski closure of $\mathbb{O}$; see \S 2 for the definition of  `supported' in the super context.
In \S 2 we will construct an action of $Q$  on $\tilde{\mathcal{W}}$ with a property that $Q^{\circ}$ leaves any two-sided ideal of $\tilde{\mathcal{W}}$ stable; see Proposition \ref{prop2.1}. This yields an action of $C_e$  on $\mathfrak{id}(\tilde{\mathcal{W}})$.

We also consider the $\mathbb{Z}_2$-graded version of the above setting. 
For a superalgebra $\mathcal{A}=\mathcal{A}_{\bar{0}}+\mathcal{A}_{\bar{1}}$,  the $\mathbb{Z}_2$-graded $\mathcal{A}$-modules will be called $\mathcal{A}$-supermodules.   
An ideal $\mathcal{I}$ of $\mathcal{A}$ is said to be graded primitive if it is the annihilator of a simple object in the category of $\mathcal{A}$-supermodules.
Denote by $\gr.\mathrm{Prim}(\mathcal{A})$ the set of graded primitive ideals of $\mathcal{A}$. For a notation $\bullet$ used in the ungraded case, we always use $\gr.\bullet$ in the $\mathbb{Z}_2$-graded case by the same way as above.  Since the  action of $Q$ on $\tilde{\mathcal{W}}$ is $\mathbb{Z}_2$-homogeneous,  we also have an action of  $C_e$ on  $\gr.\mathfrak{id}(\tilde{\mathcal{W}})$. 
Our first main result reads:
\begin{theorem} \label{mainthm}
For any $\mathcal{J} \in \mathrm{Prim}_\mathbb{O}(\mathcal{U})$, the set $\{ \mathrm{Cl}(V_{\bar{1}}) \otimes \mathcal{I} \mid  \mathcal{I} \in \mathrm{Prim}^{\mathrm{fin}} (\mathcal{W}), \mathcal{I}^{\dag}=\mathcal{J} \}$  consisting of 
the primitive ideals of $\mathcal{W}$ lying over  $\mathcal{J}$,  
 is a single $C_e$-orbit. For any $\mathcal{J} \in \gr.\mathrm{Prim}_\mathbb{O}(\mathcal{U})$, the set consisting of the graded primitive ideals of $\mathcal{W}$ 
 lying over  $\mathcal{J}$ is also a single $C_e$-orbit.
\end{theorem}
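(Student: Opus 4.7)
The plan is to reduce the statement to Losev's theorem for the non-super pair $(\mathcal{W}_{0},\mathcal{U}_{0})$ by relating the super objects on both sides of $\bullet^{\dag}$ to their classical counterparts. The three key inputs are the decomposition $\tilde{\mathcal{W}}=\mathrm{Cl}(V_{\bar{1}})\otimes\mathcal{W}$, the representation-theoretic equivalence between $\mathcal{W}$ and $\tilde{\mathcal{W}}$ (Proposition \ref{prop2.5}), and the finite extension $\mathcal{W}_{0}\hookrightarrow\tilde{\mathcal{W}}$ by $\dim\ggg_{\bar{1}}$ odd generators.

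First I would transfer the problem from $\mathcal{W}$ to $\tilde{\mathcal{W}}$. Since $\mathrm{Cl}(V_{\bar{1}})$ is a simple (super)algebra, the map $\mathcal{I}\mapsto\mathrm{Cl}(V_{\bar{1}})\otimes\mathcal{I}$ is a $C_{e}$-equivariant bijection between the primitive ideals of $\mathcal{W}$ and those of $\tilde{\mathcal{W}}$, and similarly for graded primitive ideals; moreover it intertwines the ascending maps of \cite{SX}. The theorem therefore becomes: for each $\mathcal{J}\in\mathrm{Prim}_{\mathbb{O}}(\mathcal{U})$, the fiber of $\bullet^{\dag}\colon\mathrm{Prim}^{\mathrm{fin}}(\tilde{\mathcal{W}})\to\mathrm{Prim}_{\mathbb{O}}(\mathcal{U})$ over $\mathcal{J}$ is a single $C_{e}$-orbit, with the same statement in the $\mathbb{Z}_{2}$-graded case.

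Next I would exploit the two parallel Clifford-type extensions $\mathcal{U}_{0}\hookrightarrow\mathcal{U}$ and $\mathcal{W}_{0}\hookrightarrow\tilde{\mathcal{W}}$, both obtained by adjoining $\dim\ggg_{\bar{1}}$ odd generators. Standard Clifford theory for (graded) primitive ideals shows that the primitive ideals of $\mathcal{U}_{0}$ lying under a given $\mathcal{J}\in\mathrm{Prim}_{\mathbb{O}}(\mathcal{U})$ form a single finite orbit under a parity-type action, and the same is true for primitive ideals of $\mathcal{W}_{0}$ lying under a given primitive ideal of $\tilde{\mathcal{W}}$. Combined with the compatibility between the super and non-super versions of $\bullet^{\dag}$ built into the construction in \cite{SX}, this yields a commuting square whose lower edge is the classical map $\bullet^{\dag}\colon\mathrm{Prim}^{\mathrm{fin}}(\mathcal{W}_{0})\to\mathrm{Prim}_{\mathbb{O}}(\mathcal{U}_{0})$, modulo the two Clifford-orbit relations. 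Applying Losev's theorem to the bottom row and then transporting upward produces the required single-$C_{e}$-orbit statement, provided the lifting is $C_{e}$-equivariant.

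The main obstacle I anticipate is precisely this lifting step. One must verify that the $C_{e}$-action on $\tilde{\mathcal{W}}$ supplied by Proposition \ref{prop2.1} restricts to the Losev $C_{e}$-action on $\mathcal{W}_{0}$, that the Clifford-theoretic correspondence between $\mathrm{Prim}^{\mathrm{fin}}(\mathcal{W}_{0})$ and $\mathrm{Prim}^{\mathrm{fin}}(\tilde{\mathcal{W}})$ is $C_{e}$-equivariant, and that a single $C_{e}$-orbit downstairs lifts to a single $C_{e}$-orbit upstairs rather than a union of several. The $\mathbb{Z}_{2}$-graded variant is obtained by replacing ordinary primitive ideals with graded ones throughout and carefully tracking parities through the Clifford correspondence; no essentially new ingredient should be needed once the bookkeeping is in order.
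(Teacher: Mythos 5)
There is a genuine gap, and it sits exactly where you defer it. Your reduction rests on treating the two extensions $\mathcal{U}_{0}\hookrightarrow\mathcal{U}$ and $\mathcal{W}_{0}\hookrightarrow\tilde{\mathcal{W}}$ as ``Clifford-type'' extensions to which a standard Clifford theory of (graded) primitive ideals applies. That premise fails: the odd generators of $\mathcal{U}$ over $\mathcal{U}_{0}$ (and of $\tilde{\mathcal{W}}$ over $\mathcal{W}_{0}$) do not satisfy Clifford relations over the even subalgebra --- their brackets land in $\ggg_{\bar{0}}$, not in scalars --- so there is no tensor decomposition and no ``parity-type action'' making the primitive ideals of $\mathcal{U}_{0}$ under a fixed $\mathcal{J}$ a single orbit. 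The only honest Clifford decomposition available is $\tilde{\mathcal{W}}\cong\mathrm{Cl}(V_{\bar{1}})\otimes\mathcal{W}$ (Theorem \ref{them1}, Proposition \ref{coro2.4}), i.e.\ over $\mathcal{W}$, not over $\mathcal{W}_{0}$; the relation between $\mathrm{Prim}(\mathcal{U})$ and $\mathrm{Prim}(\mathcal{U}_{0})$ is a genuinely harder lying-over problem (in type \Rmnum{1} it is Letzter's bijection, used in the paper only for the $C_{e}=1$ discussion, not in the proof of the theorem). Consequently the ``commuting square'' whose bottom row is Losev's even theorem does not exist in the form you need, and the lifting of a single $C_{e}$-orbit from $(\mathcal{W}_{0},\mathcal{U}_{0})$ to $(\tilde{\mathcal{W}},\mathcal{U})$ --- which you flag as the main obstacle but do not argue --- is precisely the content of the theorem, not a bookkeeping step.

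The paper's proof does not descend to primitive ideals of $\mathcal{U}_{0}$ at all. It first proves Proposition \ref{finitefiberprop}: the fiber $\{\mathcal{I}\in\mathrm{Prim}^{\mathrm{fin}}(\mathcal{W}):\mathcal{I}^{\dag}=\mathcal{J}\}$ is exactly the set of minimal primes over $\mathcal{J}_{\dag}$, via Gelfand--Kirillov dimension (Lemma \ref{legkdim}) and Borho--Kraft. Then, for one such minimal prime $\mathcal{I}_{1}$, it forms the $Q$-stable ideal $\bigcap_{\gamma\in C_{e}}\gamma(\mathrm{Cl}(V_{\bar{1}})\otimes\mathcal{I}_{1})$ of $\tilde{\mathcal{W}}$ and applies Losev's Theorem 4.1.1 of \cite{Lo11} --- with $\mathcal{U}$ and its two-sided ideals viewed as Harish--Chandra bimodules over $\mathcal{U}_{0}$, which is where $\mathcal{W}_{0}$ actually enters --- to conclude that this intersection equals $\mathcal{J}_{\tilde{\dag}}=\mathrm{Cl}(V_{\bar{1}})\otimes\mathcal{J}_{\dag}$ (using Lemma \ref{lem2.4}, Proposition \ref{coro2.4} and Dixmier's Proposition 3.1.10). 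Hence every minimal prime over $\mathcal{J}_{\dag}$ lies in the $C_{e}$-orbit of $\mathcal{I}_{1}$, which together with Proposition \ref{finitefiberprop} gives the single-orbit statement; the graded case then follows from Lemma \ref{graded prime} and the fact that the $Q$-action commutes with the parity automorphism $\sigma$. To repair your proposal you would need to replace the Clifford-theoretic descent to $\mathcal{U}_{0}$ by these two ingredients (the fiber identification and the Harish--Chandra bimodule input), or supply an independent argument for your lifting step --- none is currently given.
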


We  also have maps $ \bullet^{\tilde{\dag}}: \mathfrak{id}(\tilde{\mathcal{W}}) \rightarrow  \mathfrak{id}(\mathcal{U})$  and 
$ \bullet_{\tilde{\dag}}:  \mathfrak{id}(\mathcal{U})  \rightarrow \mathfrak{id}(\tilde{\mathcal{W}}) $ which can be defined similarly to $ \bullet^{\dag}$ 
and  $ \bullet_{\dag}$; see Lemma \ref{lem2.4}.  Theorem \ref{mainthm} is equivalent to saying that the set $\{  \mathcal{\tilde{I}} \mid \mathcal{\tilde{I}}\in \mathrm{Prim}^{\mathrm{fin}} (\tilde{\mathcal{W}}), \mathcal{\tilde{I}}^{\tilde{\dag}}=\mathcal{J} \}$ of 
primitive ideals lying over  $\mathcal{J}$,  is a single $C_e$-orbit.

The strategy of our proof  is  applying  \cite[Theorem 4.1.1]{Lo11} to the Harish-Chandra bimodule  $\mathcal{U}$ over $\mathcal{U}_{0}$
and the relation among $\mathcal{W}$, $\mathcal{W}_{0}$ and $\tilde{\mathcal{W}}$ introduced previously. This is
highly inspired by  \cite[\S 6]{Lo15}.

We can recover $\mathcal{I}$  from  $\mathrm{Cl}(V_{\bar{1}}) \otimes \mathcal{I}$ by Corollary \ref{coro2.4}.
It is known that  the map $\bullet^\dag$ sends $\mathrm{Prim}^{\mathrm{fin}}(\mathcal{W})$ to $\mathrm{Prim}_{\mathbb{O}}(\mathcal{U})$; see \cite[Theorem 4.8]{SX}. So Theorem \ref{mainthm}  almost completely reduces the problem of classifying  $\mathrm{Prim}^{\mathrm{fin}} (\mathcal{W})=\mathrm{Irr}^{\mathrm{fin}}(\mathcal{W})$ to that of $\mathrm{Prim}(\mathcal{U})$.   If we know   $\mathrm{Prim}(\mathcal{U})$ and $C_e$ is trivial, Theorem \ref{mainthm}  gives  a description of   $\mathrm{Irr}^{\mathrm{fin}}(\mathcal{W})$; see \S \ref{2.4}.   For the recent progress on the primitive ideals of Lie superalgebras, see for examples \cite{CoM} and \cite{Mu97}. 

We say that  $ M \in \mathrm{Irr}^{\mathrm{fin}}(\tilde{\mathcal{W}})$ (or $M' \in \mathrm{Irr}^{\mathrm{fin}}(\mathcal{W})$) lies over a primitive ideal 
$\mathcal{J}$ of $\mathcal{U}$ if so do their annihilators. It is well known that  for the basic classical Lie superalgebras $\ggg$, any primitive ideal of  $\mathcal{U}$ is the annihilator $\widehat{J}(\lambda)$ of a highest weight simple module $\widehat{L}(\lambda)$ for some $\lambda \in \mathfrak{h}^* $. We say that a finite dimensional simple $\mathcal{\tilde{W}}$-module has \textit{center character} $\lambda $  if it lies over $\widehat{J}(\lambda)$. 
Let  $\mathrm{Irr}_\lambda(\tilde{\mathcal{W}})$ stand for the set of isomorphism classes of $\tilde{\mathcal{W}}$-supermodules 
with center character $\lambda$. Define  $\mathrm{Irr}_\lambda(\mathcal{W}_0)$,  $\mathrm{Irr}_\lambda^{\mathrm{fin}}(\mathcal{W}_0)$  and  $\mathrm{Irr}_\lambda^{\mathrm{fin}}(\mathcal{W})$ similarly. 
Theorem \ref{mainthm} gives us an action of  $C_e$ on  $\mathrm{Irr}_\lambda^{\mathrm{fin}}(\tilde{\mathcal{W}})$ and $\mathrm{Irr}_\lambda^{\mathrm{fin}}(\mathcal{W})$; see \S \ref{sec2.5}.

\subsection{ Finite dimensional  representations of basic type \Rmnum{1} W-superalgebras}

In the remaining  part of this section, let $\ggg=\ggg_{\bar{0}}+\ggg_{\bar{1}}$ be a basic type \Rmnum{1} simple Lie superalgebra.
Namely,  $\ggg$ is one of the following list, Type (A): $\mathfrak{gl}(m|n), \mathfrak{sl}(m|n), \\ \mathfrak{sl}(n|n)/\mathbb{C}I_{n|n}$; 
Type(C): $\mathfrak{osp}(2|2n)$.  

A classification of simple $\ggg$-supermodules was obtained in \cite{CM}.
It was proved  that there is an one-to-one correspondence between 
the set of isomorphism classes of  simple $\ggg$-supermodules  and 
that of the simple $\ggg_{\bar{0}}$-modules. Given a simple $\ggg_{\bar{0}}$-module $V$,  we denote by  $\widehat{V}$ the
simple $\ggg$-supermodules under this correspondence, which is the unique simple quotient of the Kac module $K(V)$.
This result is fundamental to the present work.
Using Skryabin's equivalence, we  decent this result to the context of W-algebras.  
More precisely, we prove that the sets 	
$\mathrm{Irr}(\mathcal{W}_0)$, $\gr.\mathrm{Irr}(\tilde{\mathcal{W}})$ and $\gr.\mathrm{Irr}(\mathcal{W})$ are bijective with each other. By abuse of notation, for a simple $\mathcal{W}_0$-module $N$, we also denote by $\widehat{N}$ the unique simple $\tilde{\mathcal{W}}$-supermodule under this correspondence. 
However, this classification of $\mathrm{Irr}(\tilde{\mathcal{W}})$ is not well organized.
For example, it is difficult  to see the behavior of  the action of  $C_e$ under the correspondence. To fix up this problem, we give another better  classification of $\gr.\mathrm{Irr}(\tilde{\mathcal{W}})$.
 To that end, we present  a triangular decomposition $ \mathcal{\tilde{W}}=\mathcal{\tilde{W}}_{+}^{\#}\otimes_{\mathbb{K}}\mathcal{W}_0\otimes_{\mathbb{K}}  \mathcal{\tilde{W}}_{-}^{\#}$ for $\tilde{\mathcal{W}}$. This can be compared with the decomposition $\ggg=\ggg_{-1} +\ggg_{0} +\ggg_{1} $ of the type \Rmnum{1} simple Lie superalgebras. A crucial point  is that  $\mathcal{W}_0$ is the ordinary finite W-algebra from $(\ggg_{\bar{0}},e)$. Using this decomposition,
for any finite dimensional simple $\mathcal{W}_0$-module $N$, we define `Verma' module $\Delta^{K}_{\mathcal{\tilde{W}}}(N)$ over $\mathcal{\tilde{W}}$ and prove that it has a unique simple
$\mathbb{Z}_2$-graded quotient $L^{K}_{\mathcal{\tilde{W}}}(N)$. We point out that  it is easy to  obtain  a triangular decomposition
$\mathcal{W}= \mathcal{W}_{-}^{\#}\otimes_{\mathbb{K}}\mathcal{W}_0'\otimes_{\mathbb{K}}   \mathcal{W}_{-}^{\#}$ for the usual finite W-superalgebra $\mathcal{W}$ by a similar method in here. A triangular decomposition  has already been obtained  for $\mathcal{W}$ arising from the general Lie superalgebras by using super Yangian presentation; see  \cite{BBG} for the principal nilpotent element $e$  and \cite{Pe} for the general case. Compared with the one for $\mathcal{\tilde{W}}$,  a disadvantage of the latter is that it is highly non-trivial to relate $\mathcal{W}_0'$ and $\mathcal{W}_0$ for general $e$,
although the two algebras coincide  when  $e$ is principal nilpotent element.

Our main tool used to compute the character of  simple $\mathcal{\tilde{W}}$-modules with integral center character
is the generalized Soergel functor $\mathbb{V}$ for $\mathcal{W}_0$ constructed in \cite{Lo15}. Let $P \subset G_{\bar{0}}$( resp. $\mathfrak{p}=\Lie(P)$) be the suitable parabolic subgroup (resp. subalgebra) constructed from an $\mathfrak{sl}_2$-triple in \cite{Lo15}. Denote by $\mathcal{O}^P$ the corresponding parabolic category $\mathcal{O}$
 and $\Lambda_\mathfrak{p}$ the set consisting of the integral  $\lambda \in \mathfrak{h}^*$ such that the 
highest weight simple module $L(\lambda)$ lies in $\mathcal{O}^P$. Let $\mathbb{V}: \mathcal{O}^P \rightarrow \mathcal{O}_\theta(\ggg_{\bar{0}},e)$ be the
generalized Soergel functor for $\mathcal{W}_0$ defined in \cite{Lo15}. The notation will be 
recalled in \S \ref{sec cher for}.
Let $\lambda \in \Lambda_\mathfrak{p} $ with $\mathbb{V}(L(\lambda))\neq 0$. Describing $\mathbb{V}(L(\lambda))$,  Losev established a character formula for the modules in $\mathrm{Irr}_\lambda^{\mathrm{fin}}(\mathcal{W}_0)$ with integral $\lambda$.  His character formula is based on the parabolic Kazhdan-Lusztig theory for $\mathcal{O}^P$. We will give a description of  $\mathbb{V}(\widehat{L}(\lambda))$ for the simple $\ggg$-supermodules $\widehat{L}(\lambda) \in \mathcal{O}^P $. Relying on the $\ggg_{\bar{0}}$-rough structure of simple $\ggg$-supermodules,  we  compute the characters of modules in $\mathrm{Irr}_\lambda^{\mathrm{fin}}(\mathcal{W})$ for integral  $\lambda$.  Note that,   just like the even case,  the set $\mathrm{Irr}_\lambda^{\mathrm{fin}}(\mathcal{W})$ is non-empty only if $\lambda \in \Lambda_{\mathfrak{p}}$.

In summary, for $\ggg$ being a basic type \Rmnum{1} Lie superalgebra, our main results are as follows.
\begin{itemize}
	
   \item[(1)] We obtain a triangular decomposition for  $\mathcal{\tilde{W}}$ and some standard properties of  Verma modules defined by it. 
	We prove that the map $ \mathrm{Irr}^{\mathrm{fin}}(\mathcal{W}_0) \rightarrow \gr.\mathrm{Irr}^{\mathrm{fin}}( \tilde{\mathcal{W}}): N \mapsto L^{K}_{\mathcal{\tilde{W}}}(N)$ is bijective and $C_e$-equivariant;
	see Proposition \ref{trian decom}. As an application, we also prove that 
	$\gr.\mathrm{Prim}(\mathcal{W})$ is bijective with $\mathrm{Prim}(\mathcal{W}_0)$; see Corollary \ref{Coro4.2}.
	\item[(2)] For  $ \lambda \in \Lambda_{\mathfrak{p}}$,   let $\mathbb{V}(L(\lambda))=\bigoplus_{i \in I_{\lambda}} N_i $ be the description of $\mathbb{V}(L(\lambda))$ obtained in \cite{Lo15}. Here $I_{\lambda}$ is a finite set and $N_i \in \mathrm{Irr}_\lambda^{\mathrm{fin}}(\mathcal{W}_0)$ for $i \in I_{\lambda} $. Then we have $$\mathbb{V}(\widehat{L}(\lambda))=\bigoplus_{i \in I_{\lambda}} L^{K}_{\mathcal{\tilde{W}}}(N_i);$$  see  Theorem \ref{Thm sger func}.
	\item[(3)] For integral $\lambda$, we will present an algorithm
to compute the characters of modules in $\mathrm{Irr}_\lambda^{\mathrm{fin}}(\mathcal{W})$;  see \S \ref*{sec4.5}. 
	\end{itemize}

Finally, we point out that the powerful tools about W-algebras developed by Losev 
   will be used in the whole paper. However, they are very technical and rely heavily on the geometry of nilpotent orbits.  In the super case, we will overcome these difficulties (see Proposition \ref{prop2.1} and \ref{Triangular Decom}).

\section{ A super version of Premet's conjecture}

We first recall the definition of finite W-(super)algebras in the sense of Premet.
We continue with the notation from  Section \S \ref{sec1.1}.
Let $l=l_{\bar{0}}+l_{\bar{1}}$ be a Lagrangian subspace 
of $\ggg(-1)$ with respect to the super symplectic form $\chi([,])$. Thus $l_{\bar{0}}$ is automatically a Lagrangian subspace of 
 $\ggg_{\bar{0}}(-1)$.  Set 

$$\mathfrak{m}=\bigoplus_{i\leq -2}\ggg(i) \oplus l, \mathfrak{m}_{\bar{0}}=\bigoplus_{i\leq -2}\ggg_{\bar{0}}(i) \oplus l_{\bar{0}}$$ and 
$$ \mathfrak{m}_\chi=\{ x-\chi(x)\mid  x \in \mathfrak{m} \},\mathfrak{m}_{\bar{0},\chi}=\{ x-\chi(x)\mid  x \in \mathfrak{m}_{\bar{0}} \}. $$
The finite W-algebra $\mathcal{W}_0$ and W-superalgebra $\mathcal{W}$ are defined as follows:
\begin{equation} \label{equ2.1}
	\mathcal{W}_0=(\mathcal{U}_0/\mathcal{U}_0\mathfrak{m}_{\bar{0},\chi})^{\ad (\mathfrak{m}_{\bar{0}})}  \mbox{\quad and \quad} \mathcal{W}=(\mathcal{U}/\mathcal{U}\mathfrak{m}_{\chi})^{\ad(\mathfrak{m})}.
\end{equation}

Let us recall the Poisson geometric realization of finite W-(super)algebras in the sense of Losev.
Denote by $A_{0}$ (resp. $A$) the Poisson (resp. super) algebra
$S[\ggg_{\bar{0}}]$(resp. Let $S[\ggg]$) with the standard bracket $\{ ,\}$ given by $\{x,y\}=[x,y]$ for all $x,y \in \ggg_{\bar{0}}$ (resp. $\ggg$).
 $\hat{A}_{0}$ (resp. $\hat{A}$) be the completion of $A_{0}$ (resp. $A$) with respect to the point $\chi \in \ggg_{\bar{0}}^{*}$(resp. $\ggg$) and
  $\mathcal{U}_{\hbar,0}^{\wedge}$ (resp.  $\mathcal{U}_{\hbar}^{\wedge}$  )  be the formal quantization of $\hat{A}_{0}$ (resp. $\hat{A}$) given by $x\ast y -y\ast x=\hbar^{2}[x,y]$ for all $x,y \in \ggg_{\bar{0}}$. Equip all the above algebras with the Kazhdan $\mathbb{K}^{*}$-action arising from the  $\mathbb{Z}$-grading on $\ggg$ and $t\cdot \hbar=t\hbar$ for all $t \in \mathbb{K}^{*}$.

 Denote by $\omega$ the even symplectic form on $[f,\ggg]$ given by $\omega(x,y)=\chi([x,y])$. Here $f$ is the one in the $
 \mathfrak{sl}_2$-triple $\{e,f,h\}$ chosen in \S1.1. Let $V=V_{\bar{0}} \oplus V_{\bar{1}}$ be the superspace $ [f,\ggg]$ if $ \dim(\ggg(-1))$ is even. Otherwise, let
$V \subset [f,\ggg]$ be a superpace which has the standard basis $v_i$,  $i,j \in \{\pm1,\ldots, \pm (\dim([f,\ggg])-1)/2 \}$  with  $\omega(v_i,v_j)=\delta_{i,-j}$. We choose such  $V$ in the present paper following the definition of W-superalgebras given in \cite{Zh}.

For a superspace $V$ with an even symplectic form,  $\mathbf{A}_{\hbar}(V)$ denotes the corresponding Weyl superalgebra; see \cite[Example 1.5]{SX} for the definition. Specially, if $V$ is pure odd,  then the Weyl superalgebra $\mathbf{A}_{\hbar}(V)$  correspondences to the Clifford algebra $\mathrm{Cl}_{\hbar}(V)$.

It  is known \cite[\S2.3]{Lo11} that there is a $Q \times \mathbb{K}^{*}$-equivariant
$$\Phi_{0,\hbar}: \mathbf{A}_{\hbar}^{\wedge}(V_{\bar{0}}) \otimes \mathcal{W}_{0,\hbar}^{\wedge} \longrightarrow \mathcal{U}_{0,\hbar}^{\wedge} \footnote{Here and in Proposition \ref{prop2.1}, the tensor product is taken in the  category of complete, super $\mathbb{K}[[\hbar]]$-algebras. For simplicity of notation, similar abbreviations are  used frequently in the present paper. It is not hard to see their exact meaning from the context.} $$
isomorphism of associative algebras.  This can be extended as follows.

\begin{prop} \label{prop2.1}
\begin{itemize}
\item[(1)]
We have  a $Q \times \mathbb{K}^{*}$-equivariant
$$ \tilde{\Phi}_{\hbar}:  \mathbf{A}_{\hbar}^{\wedge}(V_{\bar{0}}) \otimes \mathcal{\tilde{W}}_{\hbar}^{\wedge} \longrightarrow \mathcal{U}_{\hbar}^{\wedge}$$
and a $\mathbb{K}^{*}$-equivariant isomorphism
$$\Phi_{1,\hbar}: \mathrm{Cl}_{\hbar}(V_{\bar{1}})\otimes \mathcal{W}_{\hbar}^{\wedge} \longrightarrow \mathcal{\tilde{W}}_{\hbar}$$
of associative algebras.
Finally this gives us a $\mathbb{K}^{*}$-equivariant isomorphism
$$ \Phi_{\hbar}:  \mathbf{A}_{\hbar}^{\wedge}(V) \otimes \mathcal{W}_{\hbar}^{\wedge} \longrightarrow \mathcal{U}_{\hbar}^{\wedge}$$
of associative algebras. Here  $\mathcal{\tilde{W}}_{\hbar}^{\wedge}$ is defined as the commutator of $\tilde{\Phi}_{\hbar}(V_{\bar{0}})$ in 
$\mathcal{U}_{\hbar}^{\wedge}$ and $\mathcal{W}_{\hbar}^{\wedge}$ is defined similarly.
\item[(2)] There are isomorphisms
$$ (\mathcal{\tilde{W}}_{\hbar}^{\wedge})_{\mathbb{K}^*-l.f}/(\hbar-1) \simeq \mathcal{\tilde{W}},\quad (\mathcal{W}_{0,\hbar}^{\wedge})_{\mathbb{K}^*-l.f}/(\hbar-1)\simeq \mathcal{W}_{0} \quad  \text{and} \quad
(\mathcal{W}_{\hbar}^{\wedge})_{\mathbb{K}^*-l.f}/(\hbar-1)\simeq \mathcal{W}$$
of associative algebra.
Here, for a vector space $V$ with a $\mathbb{K}^{*}$-action,  $(V)_{\mathbb{K}^*-l.f}$ denotes the sum of
all  finite dimensional $\mathbb{K}^{*}$-stable subspace of $V$.
\item[(3)] There is an embedding $ \mathfrak{q}:=\mathrm{Lie}(Q) \hookrightarrow  \mathcal{\tilde{W}}$ of Lie algebras such that
the adjoint action of $\mathfrak{q}$ coincides with  the differential of  the $Q$-action.
\end{itemize}
\end{prop}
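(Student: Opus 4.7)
The plan is to reduce the three parts of the proposition to Losev's construction in the purely even case together with the structural results of \cite{SX}. I would treat part (1) first, since parts (2) and (3) are then essentially automatic.

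For part (1), I would begin by observing that the map $\tilde{\Phi}_{\hbar}$ is not really a new construction: the input to Losev's argument producing $\Phi_{0,\hbar}$ is (a) a good grading, (b) a Kazhdan $\mathbb{K}^{*}$-action, and (c) an even symplectic slice coming from $[f,\ggg_{\bar{0}}]$. Since the $\mathfrak{sl}_{2}$-triple $\{e,h,f\}$ lies in $\ggg_{\bar{0}}$, the good grading extends to all of $\ggg$, the Kazhdan $\mathbb{K}^{*}$-action is inherited by $\mathcal{U}_{\hbar}^{\wedge}$, and the even part $V_{\bar{0}}\subset[f,\ggg]$ still carries a non-degenerate even symplectic form. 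I would therefore transport Losev's Darboux-type argument verbatim: choose a $Q\times\mathbb{K}^{*}$-equivariant quantized embedding of $\mathbf{A}_{\hbar}^{\wedge}(V_{\bar{0}})$ into $\mathcal{U}_{\hbar}^{\wedge}$, define $\tilde{\mathcal{W}}_{\hbar}^{\wedge}$ as the (graded, but here ordinary, since $V_{\bar{0}}$ is even) centralizer, and then use the super analogue of Losev's decomposition lemma to deduce that the multiplication map is an isomorphism. The isomorphism $\Phi_{1,\hbar}$ is exactly the content of Theorem~3.11 in \cite{SX}: the odd part $V_{\bar{1}}\subset[f,\ggg]$ quantizes to a Clifford factor $\mathrm{Cl}_{\hbar}(V_{\bar{1}})$ inside $\tilde{\mathcal{W}}_{\hbar}$, and the graded centralizer construction yields the complementary factor $\mathcal{W}_{\hbar}^{\wedge}$. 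Composing $\tilde{\Phi}_{\hbar}$ with $\mathrm{id}\otimes\Phi_{1,\hbar}$ and using $\mathbf{A}_{\hbar}^{\wedge}(V_{\bar{0}})\otimes\mathrm{Cl}_{\hbar}(V_{\bar{1}})=\mathbf{A}_{\hbar}^{\wedge}(V)$ yields $\Phi_{\hbar}$.

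For part (2), I would invoke the standard Kazhdan-specialization machinery. The $\mathbb{K}^{*}$-action has positive weights on the quantization, so taking $\mathbb{K}^{*}$-locally finite vectors and specializing $\hbar=1$ recovers the filtered algebra whose associated graded is the coordinate ring of the Slodowy slice; this is exactly the definition of $\mathcal{W}_{0}$, $\mathcal{W}$, and $\tilde{\mathcal{W}}$. The key compatibility to check is that the defining-centralizer property is preserved under this procedure, which follows because taking locally finite vectors commutes with taking centralizers of locally finite subspaces. Part (3) then follows from a Hamiltonian-action argument: the differential of the $Q$-action on $\tilde{\mathcal{W}}_{\hbar}^{\wedge}$ is inner (this is where one uses that $Q$ is a subgroup of the adjoint group of $\ggg_{\bar{0}}$, so its action on $\mathcal{U}_{\hbar}^{\wedge}$ comes from inner derivations, and $\tilde{\Phi}_{\hbar}$ is $Q$-equivariant so the inner derivation restricts to $\tilde{\mathcal{W}}_{\hbar}^{\wedge}$). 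Passing to the $\mathbb{K}^{*}$-locally finite part and specializing gives the required Lie algebra embedding $\mathfrak{q}\hookrightarrow\tilde{\mathcal{W}}$.

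The step I expect to cost the most is checking that the graded centralizer $\tilde{\mathcal{W}}_{\hbar}^{\wedge}$ is preserved by $Q$ and intersects $\mathrm{Cl}_{\hbar}(V_{\bar{1}})$ in the expected way so that $\Phi_{1,\hbar}$ is well-defined; the super-signs in the commutation of odd elements of $V_{\bar{1}}$ with potential odd lifts of generators need to be bookkept carefully. Once Theorem~3.11 of \cite{SX} is invoked, however, this bookkeeping is already done, so the bulk of the argument becomes a direct translation of \S2.3 of \cite{Lo11} and \S6 of \cite{Lo15} into the present super setting, with Losev's even slice $V$ replaced by $V_{\bar{0}}$ throughout.
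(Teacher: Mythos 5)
Your proposal follows essentially the same route as the paper: part (1) is obtained by feeding Losev's $Q\times\mathbb{K}^{*}$-equivariant even isomorphism $\Phi_{0,\hbar}$ (\S2.3 of \cite{Lo11}) into the super Darboux--Weinstein decomposition machinery of \cite{SX} to produce $\tilde{\Phi}_{\hbar}$, $\Phi_{1,\hbar}$ and hence $\Phi_{\hbar}$, part (2) is the standard $\mathbb{K}^{*}$-locally-finite specialization at $\hbar=1$ as in \cite{Lo11} and Theorem 3.8 of \cite{SX}, and part (3) is Losev's comoment-map (Hamiltonian) argument from \S2.5 of \cite{Lo11}. The only point to tighten is in (3): the restriction to $\tilde{\mathcal{W}}_{\hbar}^{\wedge}$ of the inner derivation $\mathrm{ad}(x)$, $x\in\mathfrak{q}\subset\mathcal{U}_{\hbar}^{\wedge}$, is not automatically inner in $\tilde{\mathcal{W}}_{\hbar}^{\wedge}$; one must correct $x$ by the quadratic quantum comoment for the $Q$-action on the factor $\mathbf{A}_{\hbar}^{\wedge}(V_{\bar{0}})$, which is precisely what the Hamiltonian argument you invoke supplies.
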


\begin{proof}
(1)
Suppose that $V_{\bar{0}}$ has a basis $\{ v_{i} \}_{ 1 \leq |i| \leq l}$ with $\omega(v_i,v_j)=\delta_{i,-j}$.
The isomorphism  $\Phi_{0,\hbar}$ gives us a $Q$-equivariant embedding $\tilde{\Phi}_{\hbar}: V_{\bar{0}} \hookrightarrow \mathcal{U}_{\hbar}^{\wedge}$ with
$[\tilde{\Phi}_{\hbar}(v_i), \tilde{\Phi}_{\hbar}(v_j)]=\delta_{i,-j}\hbar$.  Now the isomorphism $\tilde{\Phi}_{\hbar}$ can be constructed as in the proof of \cite[Theorem 1.6]{SX}. For the construction of  $\Phi_{1,\hbar}$, see also Case 1 in the proof of \cite[Theorem 1.6]{SX}.
The isomorphism  $\Phi_{\hbar}$ can be constructed from the embedding $ \Phi_{\hbar}: V \hookrightarrow \mathcal{U}_{\hbar}^{\wedge}$ given by
$\Phi_{\hbar}|_{V_{\bar{0}}}=\tilde{\Phi}_{\hbar}$ and $\Phi_{\hbar}|_{V_{\bar{1}}}=\Phi_{1,\hbar}$.

(2) The second isomorphism was proved in \cite{Lo11}. The remaining statements follow by   similar arguments as in the proof of \cite[Theorem 3.8]{SX}.

(3) View $\mathcal{U}$ as a Harish-Chandra $\mathcal{U}_0$-bimodule and use \cite[\S2.5]{Lo11}.
\end{proof}

\begin{remark}
\begin{itemize}
	\item[(1)] In  Proposition \ref{prop2.1} above we do not claim that $\Phi_{\hbar}$ is $Q$-equivariant, although this is probably true.
	\item[(2)] Note that \eqref{equ2.1} can be interpreted as the Hamiltonian reduction of adjoint action of $\mathfrak{m}$ (resp. $\mathfrak{m}_{\bar{0}}$) on $\mathcal{U}$ (resp. $\mathcal{U}_0$). Similarly $\tilde{\mathcal{W}}$ can be viewed as the Hamiltonian reduction of 
	 $\mathfrak{m}_{\bar{0}}$-action on $\mathcal{U}$, namely $\tilde{\mathcal{W}}=(\mathcal{U}/\mathcal{U}\mathfrak{m}_{\bar{0},\chi})^{\ad(\mathfrak{m}_{\bar{0},\chi})}$. Moreover, there exists an
	odd commuting Lie superalgebra $\mathfrak{n} \subset \tilde{\mathcal{W}}$ such that $ \mathcal{W}=(\tilde{\mathcal{W}}/\tilde{\mathcal{W}}\mathfrak{n})^{\ad(\mathfrak{n})}$. Thus we may divide the reduction $(\mathcal{U}/\mathcal{U}\mathfrak{m}_{\chi})^{\ad(\mathfrak{m})}$ into two steps.  The algebra $\tilde{\mathcal{W}}$ 
	is obtained from the first one. Our setting can be viewed as an example of quantum super versions of the reduction by stages 
	in the classical symplectic geometry; see \cite{MMOPS}.  
	\end{itemize}
\end{remark}	

Proposition \ref{prop2.1} gives us the following  $Q \times \mathbb{K}^{*}$-equivariant version of  \cite[Theorem 4.1]{SX}. 
\begin{theorem}\label{them1}
\begin{itemize}
\item[(1)] We have a $Q \times \mathbb{K}^{*}$-equivariant  embedding  $\mathcal{W}_{0} \hookrightarrow  \tilde{\mathcal{W}}$ of associative  algebras. The latter is generated over the former by $\dim(\ggg_{\bar{1}})$ odd elements.
\item[(2)] Moreover we have an isomorphism
$$ \Phi_1 : \tilde{\mathcal{W}} \longrightarrow \mathrm{Cl}(V_{\bar{1}})\otimes_{\mathbb{K}} \mathcal{W}$$
 of associative algebras. Here $\mathrm{Cl}(V_{\bar{1}})$ is the Clifford algebra on the vector space $V_{\bar{1}}$
 with the symmetric bilinear form $\chi([\cdot,\cdot])$.
\end{itemize}
\end{theorem}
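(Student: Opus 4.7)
The plan is to descend the statements from the formal quantum level of Proposition \ref{prop2.1} to the classical setting by restricting to $\mathbb{K}^*$-locally finite parts and specialising $\hbar = 1$.

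\textbf{Constructing the embedding.} For part (1), I would begin from the two $Q \times \mathbb{K}^*$-equivariant isomorphisms $\tilde{\Phi}_\hbar$ and $\Phi_{0,\hbar}$ of Proposition \ref{prop2.1}(1). By construction (inherited from the recipe in the proof of Theorem 1.6 of \cite{SX}), the restriction $\tilde{\Phi}_\hbar|_{V_{\bar 0}}$ coincides with $\Phi_{0,\hbar}|_{V_{\bar 0}}$ once both are viewed as maps into $\mathcal{U}_\hbar^\wedge$ via the natural $Q \times \mathbb{K}^*$-equivariant inclusion $\mathcal{U}_{0,\hbar}^\wedge \hookrightarrow \mathcal{U}_\hbar^\wedge$. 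Passing to commutants of $V_{\bar 0}$ in these two ambient algebras then yields a $Q \times \mathbb{K}^*$-equivariant embedding $\mathcal{W}_{0,\hbar}^\wedge \hookrightarrow \tilde{\mathcal{W}}_\hbar^\wedge$. Applying the $\mathbb{K}^*$-locally finite functor and quotienting by $(\hbar-1)$, as granted by Proposition \ref{prop2.1}(2), produces the desired $Q \times \mathbb{K}^*$-equivariant embedding $\mathcal{W}_0 \hookrightarrow \tilde{\mathcal{W}}$.

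\textbf{The isomorphism and the generator count.} For part (2), I would restrict the $\mathbb{K}^*$-equivariant isomorphism $\Phi_{1,\hbar} : \mathrm{Cl}_\hbar(V_{\bar 1}) \otimes \mathcal{W}_\hbar^\wedge \to \tilde{\mathcal{W}}_\hbar$ to $\mathbb{K}^*$-locally finite parts and specialise $\hbar = 1$ to obtain $\Phi_1 : \tilde{\mathcal{W}} \to \mathrm{Cl}(V_{\bar 1}) \otimes \mathcal{W}$. The commutation relations $[\tilde{\Phi}_\hbar(v_i), \tilde{\Phi}_\hbar(v_j)] = \omega(v_i, v_j)\hbar$ with $\omega(x,y) = \chi([x,y])$ identify the symmetric bilinear form on $V_{\bar 1}$ used to define $\mathrm{Cl}(V_{\bar 1})$ with $\chi([\cdot,\cdot])$, as required. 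Returning to the second assertion of part (1), under $\Phi_1$ the factor $\mathrm{Cl}(V_{\bar 1})$ contributes $\dim V_{\bar 1}$ odd generators, while $\mathcal{W}$ contributes $\dim \ggg_{\bar 1}^e$ further odd generators via the PBW presentation of $\mathcal{W}$ along $\ggg^e = \ggg_{\bar 0}^e \oplus \ggg_{\bar 1}^e$. Their sum equals $\dim \ggg_{\bar 1}$ by the decomposition $\ggg_{\bar 1} = \ggg_{\bar 1}^e \oplus [f,\ggg]_{\bar 1}$, with the harmless one-dimensional correction when $\dim \ggg(-1)$ is odd absorbed into the choice of $V$.

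\textbf{Main obstacle.} The technical crux is ensuring that the commutant construction in the completed quantum algebra commutes with extraction of $\mathbb{K}^*$-locally finite parts and specialisation, so that the commutant of $V_{\bar 0}$ in $\mathcal{U}_{0,\hbar}^\wedge$ (resp. $\mathcal{U}_\hbar^\wedge$) specialises to $\mathcal{W}_0$ (resp.\ $\tilde{\mathcal{W}}$) rather than to some strict overalgebra. This is standard in the Losev framework but must be invoked explicitly, using $\S$2.3 of \cite{Lo11} for the even case and the proof of Theorem 3.8 of \cite{SX} for the super case. Once these compatibilities are in hand, the remaining content reduces to bookkeeping about $Q \times \mathbb{K}^*$-equivariance, which transfers along the functors $(-)_{\mathbb{K}^*\text{-}\mathrm{lf}}$ and $(-)/(\hbar-1)$ because the $Q$-action on $\mathcal{U}_\hbar^\wedge$ commutes with the Kazhdan $\mathbb{K}^*$-action.
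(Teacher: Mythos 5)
Your proposal is correct and follows essentially the same route as the paper: the paper deduces Theorem \ref{them1} by combining the $Q\times\mathbb{K}^{*}$-equivariant completed isomorphisms of Proposition \ref{prop2.1} with the descent argument of Theorem 4.1 of \cite{SX} (restriction to $\mathbb{K}^{*}$-locally finite parts and specialisation at $\hbar=1$), which is exactly the plan you spell out, including the identification of the bilinear form $\chi([\cdot,\cdot])$ and the $\dim(\ggg_{\bar{1}})$ odd-generator count via $\ggg_{\bar{1}}=\ggg_{\bar{1}}^{e}\oplus[f,\ggg]_{\bar{1}}$.
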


\begin{proof}
 The  proof is similar to the proof of  \cite[Theorem 4.1]{SX} and will be skipped.
 \end{proof}

Since it will be frequently used in later, it is helpful to recall the construction of  $\Phi_1$ in the following slightly general setting.

\begin{prop}\label{coro2.4}
	For a two-sided ideal  $ \tilde {\mathcal{I}}$ of  $\tilde{\mathcal{W}}$,    we have $\tilde {\mathcal{I}}=\mathrm{Cl}(V_{\bar{1}})\otimes_{\mathbb{K}} \mathcal{I}$. Here $\mathcal{I}$ is the two-sided 
ideal of $\mathcal{W}$  consisting of  elements anti-commuting with  $\mathrm{Cl}(V_{\bar{1}})$.
\end{prop}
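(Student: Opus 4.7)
The plan is to pull $\tilde{\mathcal{I}}$ across the isomorphism $\Phi_1 : \tilde{\mathcal{W}} \to \mathrm{Cl}(V_{\bar{1}}) \otimes \mathcal{W}$ of Theorem \ref{them1} and apply the classical fact that every two-sided ideal of a tensor product $A \otimes B$ with $A$ a (super) central simple algebra is of the form $A \otimes J$. Under $\Phi_1$ the embedded copy $\mathcal{W} \hookrightarrow \tilde{\mathcal{W}}$ corresponds to $1 \otimes \mathcal{W}$, and it is characterized intrinsically as the super-centralizer of $\mathrm{Cl}(V_{\bar{1}})$ inside $\tilde{\mathcal{W}}$; for a homogeneous $w \in \mathcal{W}$ and a homogeneous $c \in \mathrm{Cl}(V_{\bar{1}})$ this amounts to $wc = (-1)^{|w||c|} cw$, which is the sense in which the statement speaks of elements ``anti-commuting with $\mathrm{Cl}(V_{\bar{1}})$''.

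First I would set $\mathcal{I} := \Phi_1(\tilde{\mathcal{I}}) \cap (1 \otimes \mathcal{W})$. This is a two-sided ideal of $\mathcal{W}$ by construction and coincides with the super-centralizer description in the statement. The inclusion $\mathrm{Cl}(V_{\bar{1}}) \otimes \mathcal{I} \subseteq \Phi_1(\tilde{\mathcal{I}})$ is then immediate from the two-sided ideal property, since $(c \otimes 1)(1 \otimes x) = c \otimes x$ for any $c \in \mathrm{Cl}(V_{\bar{1}})$ and any $x \in \mathcal{I} \subseteq \Phi_1(\tilde{\mathcal{I}})$.

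For the reverse inclusion I would fix a homogeneous $\mathbb{K}$-basis $\{e_S\}$ of $\mathrm{Cl}(V_{\bar{1}})$ obtained from ordered monomials in an orthogonal basis of $V_{\bar{1}}$, and expand an arbitrary $z \in \Phi_1(\tilde{\mathcal{I}})$ uniquely as $z = \sum_S e_S \otimes w_S$ with $w_S \in \mathcal{W}$. The goal is to exhibit each $1 \otimes w_S$ as an element of $\Phi_1(\tilde{\mathcal{I}})$. This is the standard ``separation of coefficients'' step: for each multi-index $S$ one constructs a $\mathbb{K}$-linear combination of products $(a \otimes 1) \, z \, (b \otimes 1)$ with $a, b \in \mathrm{Cl}(V_{\bar{1}})$ that isolates the term $1 \otimes w_S$, relying on the non-degeneracy of the symmetric form on $V_{\bar{1}}$ and on the fact that the unit of $\mathrm{Cl}(V_{\bar{1}})$ lies in the two-sided ideal generated by any nonzero homogeneous element (super central simplicity). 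This places each $w_S$ in $\mathcal{I}$ and yields $z \in \mathrm{Cl}(V_{\bar{1}}) \otimes \mathcal{I}$.

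The main technical obstacle is the bookkeeping with Koszul signs when sliding Clifford elements past elements of $\mathcal{W}$ of mixed parity; these signs are harmless but must be tracked to confirm that the separation actually produces $1 \otimes w_S$ and not a sign-twisted variant. When $\dim V_{\bar{1}}$ is even, $\mathrm{Cl}(V_{\bar{1}})$ is a matrix algebra over $\mathbb{K}$ already in the ungraded sense and the isolation step reduces to the textbook correspondence between ideals of $M_n(R)$ and ideals of $R$; in the remaining parity one argues in the graded category, where $\mathrm{Cl}(V_{\bar{1}})$ is central simple by the standard classification. This is compatible with the way the corollary is later used in tandem with the $\gr.\mathrm{Prim}$ setup of Section~2.
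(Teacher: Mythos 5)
Your setup (defining $\mathcal I$ as the intersection of $\Phi_1(\tilde{\mathcal I})$ with the super-centralizer of $\mathrm{Cl}(V_{\bar 1})$, and the easy inclusion $\mathrm{Cl}(V_{\bar 1})\otimes\mathcal I\subseteq\Phi_1(\tilde{\mathcal I})$) is fine, and the underlying mechanism is the same one the paper exploits; but the step you dismiss as harmless Koszul bookkeeping is exactly where your argument breaks. Theorem \ref{them1}(2) realizes $\mathcal W$ inside $\tilde{\mathcal W}$ as the \emph{super}-centralizer of $\mathrm{Cl}(V_{\bar 1})$, so $\tilde{\mathcal W}$ is the super tensor product, not the ordinary one. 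Consequently, writing $z=\sum_S e_S\, w_S$ with $w_S\in\mathcal W$, one has for homogeneous $a,b\in\mathrm{Cl}(V_{\bar 1})$ that $a\,z\,b=\sum_S a e_S b\,\sigma^{|b|}(w_S)$, where $\sigma$ is the parity automorphism of Lemma \ref{graded prime}. Since an odd monomial $e_S$ can be moved to $1$ only by using odd $a$ or $b$, your separation of coefficients isolates $\sigma$-twisted coefficients; recovering $w_S$ itself requires $\sigma(\tilde{\mathcal I})=\tilde{\mathcal I}$, i.e.\ that $\tilde{\mathcal I}$ be $\mathbb{Z}_2$-graded, which the proposition does not assume (and cannot, since it is applied in the proof of Theorem \ref{mainthm} to ideals not known to be graded). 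The same issue undercuts both halves of your parity dichotomy: ``arguing in the graded category'' only controls graded ideals (and with the paper's choice of $V$ the odd-rank case does not even occur), while in the even-rank case the textbook $M_n(R)$ correspondence identifies $\tilde{\mathcal I}$ with $\mathrm{Cl}(V_{\bar 1})\otimes(\tilde{\mathcal I}\cap Z)$ for $Z$ the \emph{ordinary} centralizer, which is a twist of $\mathcal W$ by the top element of the Clifford algebra rather than $\mathcal W$ itself, and passing from $Z$ to $\mathcal W$ runs into the same $\sigma$-problem.

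This is a genuine gap rather than a presentational one, because (super) central simplicity of the Clifford factor alone does not imply the ungraded statement: in $\mathrm{Cl}_3$ with generators $x_1,x_2,x_3$, $x_i^2=1$, the super-centralizer of $\mathrm{Cl}_2=\langle x_1,x_2\rangle$ is $\mathrm{Cl}(\mathbb{K}x_3)$, yet the two ungraded simple summands $\mathrm{Cl}_3\cdot(1\pm\sqrt{-1}\,x_1x_2x_3)/2$ are two-sided ideals that are not of the form $\mathrm{Cl}_2\otimes J$: in the decomposition $\mathrm{Cl}_3=\mathrm{Cl}_2\oplus\mathrm{Cl}_2x_3$ they are graphs of right multiplication by $\pm\sqrt{-1}\,x_1x_2$, not rectangles. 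So any complete proof must genuinely use either gradedness of the ideal or more than formal simplicity, and this is where your route diverges from the paper's: the paper does not invoke central simplicity at all, but chooses odd elements $x_1,\dots,x_{\dim V_{\bar 1}}$ with $x_i^2=1$ and $x_ix_j=-x_jx_i$, peels off one rank-one Clifford factor at a time via a quantum analog of Lemma 2.2(2) of \cite{SX} (explicit two-sided projection identities of the same kind as those displayed in the proof of Proposition \ref{prop2.5}), and inducts on $\dim V_{\bar 1}$. As written, your argument does prove the proposition for $\mathbb{Z}_2$-graded ideals; to match the statement you would need either an additional argument that the relevant ideals are $\sigma$-stable, or to carry out (and justify for ungraded ideals) the one-generator-at-a-time reduction that the paper uses.
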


\begin{proof}

By   Theorem   \ref{them1} (2)   there exist $x_1, \ldots, x_{\dim(V_{\bar{1}})} \in \tilde{\mathcal{W}}$  with 
$$x_i^2=1 \text {  and } x_ix_j=-x_jx_i \text{ for all  distinct $i , j  \in \{ 1, \ldots , \dim(V_{\bar{1}})\}$.}$$     
By  a quantum analog of \cite[Lemma 2.2(2)]{SX},  we have  that 
$ \tilde{\mathcal{I}}=\mathrm{Cl}(\mathbb{K}\langle  x_1 \rangle ) \otimes_{\mathbb{K}} \tilde{\mathcal{I}}_1$ as associative algebras.  Here  $\tilde{\mathcal{I}}_1$ denotes  the space anti-commuting with $x_1$.
Now the corollary follows by induction on $\dim(V_{\bar{1}})$.
\end{proof}

 \subsection{Equivalence of  $\mathcal{W}$-$\mathrm{Mod} $ and $ \mathcal{\tilde{W}}$-$\mathrm{Mod}$}
 
Let $\mathfrak{u}_{\bar{1}}$ be a Lagrangian of $V_{\bar{1}}$ and  $\mathfrak{u}^*_{\bar{1}}$ be its dual (given by the non-degenerate symmetric two form).
Note that  $V_{\bar{1}}=\mathfrak{u}_{\bar{1}} \oplus \mathfrak{u}^*_{\bar{1}}$.
View the exterior algebra 
$\bigwedge(\mathfrak{u}_{\bar{1}}^*)$ as a $ \mathrm{Cl}(V_{\bar{1}})$-module by 
$$ u\cdot x=ux \text{ and } v \cdot x=\omega(v,x)  \text{ for all $u,x \in \mathfrak{u}_{\bar{1}}^* $ and $v \in\mathfrak{u}_{\bar{1}}$ }. $$

The following proposition  establishes an explicit relation between the categories  
$\mathcal{W}$-Mod and $\mathcal{\tilde{W}}$-Mod. It relates to Proposition \ref{coro2.4}
via the bijective map $\mathrm{Irr}^{\mathrm{fin}}(\mathcal{\tilde{W}}) \rightarrow \Prim^{\fin}(\mathcal{\tilde{W}})$. 

\begin{prop} \label{prop2.5}
	For any $M \in \mathcal{\tilde{W}}$-$\mathrm{Mod}$, we have an isomorphism 
	$$ \bigwedge(\mathfrak{u}_{\bar{1}}^*) \otimes_{\mathbb{K}}  M' \rightarrow M: x \otimes m \mapsto x \cdot m $$ 
	of $\mathcal{\tilde{W}}$-modules. Here $M'$ is the annihilator of $\mathfrak{u}_{\bar{1}}$,
	which is naturally a $\mathcal{W}$-module and we view $\bigwedge(\mathfrak{u}_{\bar{1}}^*) \otimes_{\mathbb{K}}  M'$
	as a $\mathcal{\tilde{W}}$-module by the isomorphism  $\Phi_1$ in Theorem \ref{them1}.
	The functor $ \mathcal{\tilde{W}}\mathrm{\text{-}Mod}  \rightarrow \mathcal{W}\mathrm{\text{-}Mod}: M \mapsto M'$  
	is an equivalence of categories with the inverse $N \mapsto \bigwedge(\mathfrak{u}_{\bar{1}}^*) \otimes_{\mathbb{K}} N$.
\end{prop}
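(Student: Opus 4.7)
The plan is to reduce the proposition to the classical Morita equivalence between a Clifford algebra of a non-degenerate form and its ground field. By the isomorphism $\Phi_{1}:\tilde{\mathcal{W}}\xrightarrow{\sim}\mathrm{Cl}(V_{\bar{1}})\otimes\mathcal{W}$ of Theorem \ref{them1}(2), it suffices to establish the equivalence for $\mathrm{Cl}(V_{\bar{1}})\otimes\mathcal{W}$-modules. The Lagrangian decomposition $V_{\bar{1}}=\mathfrak{u}_{\bar{1}}\oplus\mathfrak{u}_{\bar{1}}^{*}$ presents $\bigwedge(\mathfrak{u}_{\bar{1}}^{*})$ as a faithful simple $\mathrm{Cl}(V_{\bar{1}})$-module with endomorphism ring $\mathbb{K}$, so $\mathrm{Cl}(V_{\bar{1}})\cong\End_{\mathbb{K}}(\bigwedge(\mathfrak{u}_{\bar{1}}^{*}))$ and the pair of functors $P\mapsto\Hom_{\mathrm{Cl}(V_{\bar{1}})}(\bigwedge(\mathfrak{u}_{\bar{1}}^{*}),P)$ and $U\mapsto\bigwedge(\mathfrak{u}_{\bar{1}}^{*})\otimes U$ implements a Morita equivalence between $\mathrm{Cl}(V_{\bar{1}})$-Mod and $\mathbb{K}$-Mod.

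The key identification is $\Hom_{\mathrm{Cl}(V_{\bar{1}})}(\bigwedge(\mathfrak{u}_{\bar{1}}^{*}),M)\cong M'$. Since $1\in\bigwedge(\mathfrak{u}_{\bar{1}}^{*})$ is a cyclic generator annihilated by $\mathfrak{u}_{\bar{1}}$, evaluation $\phi\mapsto\phi(1)$ takes values in $M'$; conversely each $m\in M'$ determines a well-defined homomorphism by $u_{i_{1}}\cdots u_{i_{k}}\cdot 1\mapsto u_{i_{1}}\cdots u_{i_{k}}\cdot m$, because the only relations among the $u_{i_{j}}\in\mathfrak{u}_{\bar{1}}^{*}$ inside $\bigwedge(\mathfrak{u}_{\bar{1}}^{*})$ descend from the Clifford relations modulo the right ideal generated by $\mathfrak{u}_{\bar{1}}$. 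The counit of the resulting adjunction is precisely the evaluation map $\bigwedge(\mathfrak{u}_{\bar{1}}^{*})\otimes M'\to M$, $x\otimes m\mapsto x\cdot m$ appearing in the statement, so it will automatically be an isomorphism once the Morita equivalence is in place.

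It remains to track the $\mathcal{W}$-action. Because $\mathcal{W}$ sits in the second tensor factor of $\mathrm{Cl}(V_{\bar{1}})\otimes\mathcal{W}$, every $w\in\mathcal{W}$ supercommutes with every $v\in\mathfrak{u}_{\bar{1}}$, so $v\cdot(w\cdot m)=(-1)^{|v||w|}w\cdot(v\cdot m)=0$ for $m\in M'$; hence $M'$ inherits a $\mathcal{W}$-submodule structure and the evaluation map is $\mathcal{W}$-equivariant as well as $\mathrm{Cl}(V_{\bar{1}})$-equivariant, hence $\tilde{\mathcal{W}}$-equivariant. In the opposite direction, $\bigwedge(\mathfrak{u}_{\bar{1}}^{*})\otimes N$ is the natural $\mathrm{Cl}(V_{\bar{1}})\otimes\mathcal{W}$-module, and a direct check gives $\mathrm{Ann}_{\mathfrak{u}_{\bar{1}}}(\bigwedge(\mathfrak{u}_{\bar{1}}^{*})\otimes N)=1\otimes N\cong N$, confirming that the two functors are quasi-inverse.

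The only genuine work is the standard Clifford computation that $\bigwedge(\mathfrak{u}_{\bar{1}}^{*})$ is simple with $\End_{\mathrm{Cl}(V_{\bar{1}})}$ equal to $\mathbb{K}$, together with the bookkeeping of Koszul signs in the super-commutativity of the two tensor factors. I do not anticipate any serious obstacle beyond these routine verifications; the whole content of the proposition is the tensor decomposition of $\tilde{\mathcal{W}}$ from Theorem \ref{them1} combined with classical Clifford module theory.
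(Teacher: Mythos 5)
Your argument is correct, but it proves the proposition by a different route than the paper. The paper's proof is an explicit rank-one induction: it splits off one hyperbolic pair $(x_1,x_1^*)$ at a time, first re-deriving inside $\tilde{\mathcal{W}}$ the factorization $\mathrm{Cl}(\mathbb{K}\langle x_1,x_1^*\rangle)\otimes\tilde{\mathcal{W}}_1\cong\tilde{\mathcal{W}}$ by surjectivity/injectivity of the multiplication map, and then verifying the module statement for that pair via the explicit decomposition $m=\bigl(m-x_1^*(x_1\cdot m)\bigr)+x_1^*(x_1\cdot m)$, iterating $\dim(\mathfrak{u}_{\bar{1}})$ times; it thus stays elementary, produces explicit projection formulas, and runs parallel to the proof of Proposition \ref{coro2.4}. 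You instead take the decomposition of Theorem \ref{them1}(2) as given and invoke classical Clifford module theory: $\bigwedge(\mathfrak{u}_{\bar{1}}^*)$ is the simple spin module, $\mathrm{Cl}(V_{\bar{1}})\cong\End_{\mathbb{K}}(\bigwedge(\mathfrak{u}_{\bar{1}}^*))$, and the identification $M'\cong\Hom_{\mathrm{Cl}(V_{\bar{1}})}(\bigwedge(\mathfrak{u}_{\bar{1}}^*),M)$ turns the statement into the Morita counit being an isomorphism. This is shorter and makes the categorical content (quasi-inverse functors, functoriality) automatic, at the cost of relying on the Lagrangian decomposition $V_{\bar{1}}=\mathfrak{u}_{\bar{1}}\oplus\mathfrak{u}_{\bar{1}}^*$ and the Wedderburn-type identification, whereas the paper's induction would survive in settings where one prefers not to invoke them. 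Two small points to tidy: the well-definedness of $\phi_m$ rests on the annihilator of $1\in\bigwedge(\mathfrak{u}_{\bar{1}}^*)$ being the \emph{left} ideal $\mathrm{Cl}(V_{\bar{1}})\mathfrak{u}_{\bar{1}}$ (you wrote ``right ideal''), and the $\mathcal{W}$-equivariance of the evaluation map must be checked with the commutation convention actually fixed by $\Phi_1$ (super- versus plain tensor factor), which you flag as Koszul bookkeeping; with those conventions pinned down your proof is complete.
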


The proof is very similar to the proof of Proposition \ref{coro2.4} and  \cite[Lemma 2.2(2)]{SX}. 
\begin{proof}
	Let $x_1, \ldots,  x_{\dim(\mathfrak{u}_{\bar{1}})}$ be a basis of $\mathfrak{u}_{\bar{1}}$
	and $x_1^*, \ldots,  x_{\dim(\mathfrak{u}_{\bar{1}})}^*$ be the dual basis of $\mathfrak{u}_{\bar{1}}^*$ with $\omega(x_i,x_j^*)=\delta_{i,j}$. We claim that there is an  isomorphism  
	$$ \Psi_1 :\mathrm{Cl} (\mathbb{C}\langle x_1,x_1^*\rangle) \otimes_{\mathbb{K}} \mathcal{\tilde{W}}_1 \rightarrow \mathcal{\tilde{W}}$$ 
	of associative algebras. Here $\mathcal{\tilde{W}}_1$ is the super-commutator  of $x_1,x_1^*$ in $\mathcal{\tilde{W}}$ and the isomorphism is given by the multiplication in  $\mathcal{\tilde{W}}$. For any $y \in \mathcal{\tilde{W}} $, we have 
	
	$$\begin{array}{lll}
		&y=y-x_1[x_1^*,y]-x_1^*[x_1,y] \\
		&+x_1([x_1^*,y]-x_1^*[x_1,[x_1^*,y]]) +x_1x_1^*[x_1,[x_1^*,y]] \\
		&+x_1^*([x_1,y]-x_1[x_1^*,[x_1,y]]) +x_1x_1^*[x_1^*,[x_1,y]].
	\end{array}$$
	Therefore $\Psi_1$ is surjective. Suppose that 
	
	$$w_0 +x_1w_1+x_1^*w_2+x_1x_1^*w_3=0$$
	for some $w_i \in \mathcal{\tilde{W}}_1,i=0,1,2,3.$
	Applying the operator $[x_1,[x_1^*,\bullet]]$ on the both side we have $w_3=0$. By the same token, we have  $w_i=0$ for $i=0,1,2$.  So $\Phi_1$ is also injective. 
	Thus the claim follows.
	Now we prove the proposition for the pair $(\mathcal{\tilde{W}}_1,\mathcal{\tilde{W}})$.
	Namely there is an isomorphism 
	$$\Psi_{1,M}: \bigwedge(x_1^*) \otimes_{\mathbb{K}}  M'_1 \rightarrow M $$
	of $\mathcal{\tilde{W}}$-modules. Here the notation has a similar meaning as in the proposition. Indeed, for any $m \in M$, we have 
	$$m=m-x_1^*(x_1\cdot m)+x_1^*x_1\cdot m.$$
	Since $x_1\cdot m$ and $ m-x_1^*(x_1\cdot m) \in M'_1 $,   $\Psi_{1,M}$ is surjective. Similarly, we can check that 
	$\Psi_{1,M}$ is injective. Now the first statement follows by repeating the above procedure $\dim(\mathfrak{u}_{\bar{1}})$ times. The second statement is a direct consequence of the first one. 
	\end{proof}

\subsection{Maps $\bullet^{\dag}$ and $\bullet_{\dag}$}

We recall the constructions of maps $\bullet^{\dag}$ and $\bullet_{\dag}$  between $\mathfrak{id}(\mathcal{W})$
and $\mathfrak{id}(\mathcal{U})$ in \cite{SX} at  first. For  $\mathcal{I} \in \mathfrak{id}(\mathcal{W})$, we denote by  $\mathrm{R}_{\hbar}(\mathcal{I}) \subset \mathcal{W}_{\hbar}$  the Rees algebra
associated with $\mathcal{I}$ and by $\mathrm{R}_{\hbar}(\mathcal{I})^{\wedge} \subset \mathcal{W}_{\hbar}^{\wedge}$ the completion of $\mathrm{R}_{\hbar}(\mathcal{I})$ at $0$. Let $\mathbf{A}(\mathcal{I})^{\wedge}_{\hbar}=\mathbf{A}_{\hbar}(V)^{\wedge}\otimes \mathrm{R}_{\hbar}(\mathcal{I})^{\wedge}$ and set $\mathcal{I}^{\dag}=(\mathcal{U}_{\hbar} \cap \Phi_{\hbar}(\mathbf{A}(\mathcal{I})^{\wedge}_{\hbar}))/(\hbar-1)$. For an ideal $\mathcal{J} \in \mathfrak{id}(\mathcal{U})$, $\bar{\mathcal{J}}_{\hbar}$  stands for the closure of $\mathrm{R}_{\hbar}(\mathcal{J})$ in $\mathcal{U}^{\wedge_{\chi}}_{\hbar}$. Define $\mathcal{J}_{\dag}$ to be the unique (by \cite[ Proposition 3.4(3)]{SX}) ideal in $\mathcal{W}$  such that
$\mathrm{R}_{\hbar}(\mathcal{J}_{\dag})=\Phi_{\hbar}^{-1}(\bar{\mathcal{J}}_{\hbar}) \cap \mathrm{R}_{\hbar}(\mathcal{W}).$

A $\mathfrak{g}_{\bar{0}}$-bimodule $M$ is said to be Harish-Chandra(HC) bimodule,  if $M$ is finitely generated and the adjoint action of $\mathfrak{g}$ on $M$ is locally finite. For any two-sided ideal $\mathcal{J}\subset \mathcal{U}$ (resp. $\mathcal{I}\subset \tilde{\mathcal{W}}$),  $\mathcal{J}_{\tilde{\dag}}$ (resp. $\mathcal{I}^{\tilde{\dag}}$) denotes the image of $\mathcal{J}$
under the functor $\bullet_{\dag}$ (resp. $\bullet^{\dag}$) in \cite[\S3]{Lo11}.  Here we view $\mathcal{J}$ and $\mathcal{I}$   as  HC-bimodules over $\mathfrak{g}_{\bar{0}}$ and $\mathcal{W}_{0}$ respectively.
\begin{lemma} \label{lem2.4}
We have that $(\mathrm{Cl(V_{\bar{1}})} \otimes_{\mathbb{K}} \mathcal{I})^{\tilde{\dag}}=\mathcal{I}^{\dag}$ and $\mathcal{I}_{\tilde{\dag}}=\mathrm{Cl}(V_{\bar{1}})\otimes_{\mathbb{K}} \mathcal{I}_{\dag}$.
\end{lemma}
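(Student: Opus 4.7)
The plan is to unpack the defining formulas for $\bullet^{\tilde\dag}$ and $\bullet_{\tilde\dag}$ and compare them with those for $\bullet^{\dag}$ and $\bullet_{\dag}$, using the factorization of the three quantum isomorphisms in Proposition~\ref{prop2.1}. The key structural observation is that, since $V=V_{\bar 0}\oplus V_{\bar 1}$ is an orthogonal decomposition of super-symplectic spaces, one has $\mathbf{A}_{\hbar}^{\wedge}(V)=\mathbf{A}_{\hbar}^{\wedge}(V_{\bar 0})\otimes\mathrm{Cl}_{\hbar}(V_{\bar 1})$, and the isomorphisms of Proposition~\ref{prop2.1}(1) satisfy $\Phi_{\hbar}=\tilde\Phi_{\hbar}\circ(\mathrm{id}_{\mathbf{A}_{\hbar}^{\wedge}(V_{\bar 0})}\otimes\Phi_{1,\hbar})$, as is immediate from the construction of $\Phi_{\hbar}$ in the proof of that proposition.

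For the first identity, I would set $\tilde{\mathcal I}=\mathrm{Cl}(V_{\bar 1})\otimes\mathcal I$. Since $\tilde{\mathcal W}\simeq\mathrm{Cl}(V_{\bar 1})\otimes\mathcal W$ as filtered algebras by Theorem~\ref{them1}(2), with the Clifford factor concentrated in Kazhdan degree zero, the Rees algebra factors as $\mathbf{R}_{\hbar}(\tilde{\mathcal I})=\mathrm{Cl}_{\hbar}(V_{\bar 1})\otimes\mathbf{R}_{\hbar}(\mathcal I)$, and the same tensor decomposition survives $\chi$-adic completion. Substituting this into the defining expression $\tilde{\mathcal I}^{\tilde\dag}=(\mathcal U_{\hbar}\cap\tilde\Phi_{\hbar}(\mathbf{A}_{\hbar}^{\wedge}(V_{\bar 0})\otimes\mathbf{R}_{\hbar}(\tilde{\mathcal I})^{\wedge}))/(\hbar-1)$ and applying the factorization of $\Phi_{\hbar}$, the argument of $\tilde\Phi_{\hbar}$ collapses to $\mathbf{A}_{\hbar}^{\wedge}(V)\otimes\mathbf{R}_{\hbar}(\mathcal I)^{\wedge}=\mathbf{A}(\mathcal I)^{\wedge}_{\hbar}$; what remains is precisely the defining expression for $\mathcal I^{\dag}$.

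For the second identity, I would work with the pullback of $\bar{\mathcal I}_{\hbar}$ instead. By Proposition~3.4(3) of \cite{SX}, the closed ideal $\Phi_{\hbar}^{-1}(\bar{\mathcal I}_{\hbar})$ in $\mathbf{A}_{\hbar}^{\wedge}(V)\otimes\mathcal W_{\hbar}^{\wedge}$ has the form $\mathbf{A}_{\hbar}^{\wedge}(V)\otimes\mathcal J$ for a unique closed ideal $\mathcal J\subset\mathcal W_{\hbar}^{\wedge}$, with $\mathcal J=\mathbf{R}_{\hbar}(\mathcal I_{\dag})^{\wedge}$. The analogous statement applied to $\tilde\Phi_{\hbar}$ gives $\tilde\Phi_{\hbar}^{-1}(\bar{\mathcal I}_{\hbar})=\mathbf{A}_{\hbar}^{\wedge}(V_{\bar 0})\otimes\tilde{\mathcal J}$ with $\tilde{\mathcal J}=\mathbf{R}_{\hbar}(\mathcal I_{\tilde\dag})^{\wedge}$. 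Combining these with $\Phi_{\hbar}=\tilde\Phi_{\hbar}\circ(\mathrm{id}\otimes\Phi_{1,\hbar})$ and the identification $\mathbf{A}_{\hbar}^{\wedge}(V)=\mathbf{A}_{\hbar}^{\wedge}(V_{\bar 0})\otimes\mathrm{Cl}_{\hbar}(V_{\bar 1})$ forces $\Phi_{1,\hbar}^{-1}(\tilde{\mathcal J})=\mathrm{Cl}_{\hbar}(V_{\bar 1})\otimes\mathcal J$. Passing to $\mathbb K^{*}$-locally finite parts and specialising at $\hbar=1$ then yields $\mathcal I_{\tilde\dag}=\mathrm{Cl}(V_{\bar 1})\otimes\mathcal I_{\dag}$. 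The only step needing real care is verifying the tensor-factor form of $\Phi_{\hbar}^{-1}(\bar{\mathcal I}_{\hbar})$ in both settings, but this is precisely the Harish-Chandra bimodule decomposition that underlies the construction of $\bullet_{\dag}$ and its super analogue in \cite{Lo11, SX}, so it can be quoted rather than reproved; everything else is formal manipulation with Rees algebras and completions.
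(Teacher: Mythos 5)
Your proposal is correct and takes essentially the paper's route: the paper disposes of the lemma by noting it ``follows directly from the above construction and Theorem \ref{them1},'' which is precisely the compatibility $\Phi_{\hbar}=\tilde{\Phi}_{\hbar}\circ(\mathrm{id}\otimes\Phi_{1,\hbar})$ from Proposition \ref{prop2.1} together with the factorization of Rees algebras and completions through $\mathrm{Cl}_{\hbar}(V_{\bar{1}})$ that you spell out. (Your parenthetical that the Clifford factor sits in Kazhdan degree zero is inaccurate --- its generators have positive degree --- but this is immaterial, since you work with the homogenized $\mathrm{Cl}_{\hbar}(V_{\bar{1}})$ exactly as in Proposition \ref{prop2.1}, so the argument is unaffected.)
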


\begin{proof}
The  $\mathbb{K}^{*}$-action (see the paragraph before Lemma 3.3.3, \cite{Lo11} ) defining  HC $\mathcal{U}_{0}$-bimodule $\tilde{\mathcal{I}}^{\tilde{\dag}}$ is 
given by $t\cdot x=t^{-2}x$ for all $x \in \ggg$ and $t \in \mathbb{K}^{*}$.
So $\mathcal{U}_{\hbar} \cap \Phi_{\hbar}(\mathbf{A}(\mathcal{I})^{\wedge}_{\hbar})$ 
coincides with the  $\mathbb{K}^{*}$-local finite part of $\Phi_{\hbar}(\mathbf{A}(\mathcal{I}))^{\wedge}_{\hbar}$. Thus the lemma follows.
For a similar fact in the even case, see \cite[Remark 3.4.4]{Lo11}. 
\end{proof}

\subsection{ Properties of  $\bullet^{\dag}$ and $\bullet_{\dag}$}

For an associative algebra $\mathcal{A}$,  $\mathrm{GK}\dim(\mathcal{A})$ denotes the Gelfand-Kirillov dimension of $\mathcal{A}$ (for the definition, see\cite{KL}).
The \textit{associated variety} $\mathbf{V}(\mathcal{J} )$ of a two-sided ideal $\mathcal{J} \in \mathfrak{id}(\mathcal{U})$, is defined to be the associated variety $\mathbf{V}(\mathcal{J}_{0})$ of $ \mathcal{J}_{0}=\mathcal{J} \cap \mathcal{U}_0$. We say that $\mathcal{J}$ is\textit{ supported} on $\mathbf{V}(\mathcal{J})$ in this case.

\begin{lemma}\label{legkdim}
For any two-sided ideal of $\mathcal{J}\subset \mathcal{U}$, we have
$$\mathrm{GK}\dim(\mathcal{U}/\mathcal{J})=\mathrm{GK}\dim(\mathcal{U}_0/\mathcal{J}_{0} )=\dim (\mathbf{V}(\mathcal{J})).$$
\end{lemma}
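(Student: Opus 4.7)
The plan is to reduce the entire statement to the classical case of the even enveloping algebra, exploiting that $\mathcal{U}$ is a finite extension of $\mathcal{U}_{0}$. By the super PBW theorem, multiplication induces an isomorphism of left $\mathcal{U}_{0}$-modules
\[
\mathcal{U}_{0}\otimes \bigwedge(\ggg_{\bar{1}}) \xrightarrow{\sim}\mathcal{U},
\]
so $\mathcal{U}$ is free of rank $2^{\dim\ggg_{\bar{1}}}$ over $\mathcal{U}_{0}$; in particular $\mathcal{U}$ is finitely generated as a left (or right) $\mathcal{U}_{0}$-module.

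For the first equality, the definition $\mathcal{I}_{0}=\mathcal{I}\cap\mathcal{U}_{0}$ forces the natural map $\mathcal{U}_{0}/\mathcal{I}_{0}\hookrightarrow \mathcal{U}/\mathcal{I}$ to be an injective ring homomorphism, so it realizes $\mathcal{U}_{0}/\mathcal{I}_{0}$ as a subalgebra of $\mathcal{U}/\mathcal{I}$. Since $\mathcal{U}/\mathcal{I}$ is a quotient of the finitely generated $\mathcal{U}_{0}$-module $\mathcal{U}$, it is finitely generated as a module over this subalgebra. A standard property of Gelfand--Kirillov dimension (see, e.g.\ \cite{KL}) asserts that for a subalgebra inclusion $A\hookrightarrow B$ with $B$ finitely generated over $A$, one has $\mathrm{GK}\dim(A)=\mathrm{GK}\dim(B)$. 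Applied here, this yields $\mathrm{GK}\dim(\mathcal{U}/\mathcal{I})=\mathrm{GK}\dim(\mathcal{U}_{0}/\mathcal{I}_{0})$.

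For the second equality, note that by the definition of the associated variety adopted just above the lemma, $\mathbf{V}(\mathcal{I})=\mathbf{V}(\mathcal{I}_{0})$, so it suffices to prove $\mathrm{GK}\dim(\mathcal{U}_{0}/\mathcal{I}_{0})=\dim\mathbf{V}(\mathcal{I}_{0})$. This is the classical theorem of Gabber (together with Borho--Kraft) for enveloping algebras of Lie algebras: with respect to the standard PBW filtration one has $\gr\mathcal{U}_{0}\cong S(\ggg_{\bar{0}})$, and, picking a good filtration on $\mathcal{U}_{0}/\mathcal{I}_{0}$, the Gelfand--Kirillov dimension of $\mathcal{U}_{0}/\mathcal{I}_{0}$ equals the Krull dimension of $S(\ggg_{\bar{0}})/\gr\mathcal{I}_{0}$, which is by definition $\dim\mathbf{V}(\mathcal{I}_{0})$.

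Both steps are independent applications of well-documented theorems, and I do not anticipate a genuine obstacle. The only bookkeeping point is to record that the identification $\mathcal{I}_{0}=\mathcal{I}\cap\mathcal{U}_{0}$ genuinely realizes $\mathcal{U}_{0}/\mathcal{I}_{0}$ as a subalgebra (rather than merely a quotient) of $\mathcal{U}/\mathcal{I}$, which is immediate from the definition and is exactly what allows the super PBW decomposition to produce a \emph{finite} module structure of $\mathcal{U}/\mathcal{I}$ over $\mathcal{U}_{0}/\mathcal{I}_{0}$.
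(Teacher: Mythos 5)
Your proof is correct and follows essentially the same route as the paper: the first equality comes from the super PBW theorem making $\mathcal{U}/\mathcal{I}$ a finite module over the subalgebra $\mathcal{U}_0/\mathcal{I}_0$ (the paper phrases this via the definition of GK~dimension in \cite{KL} plus PBW), and the second is the standard filtered--graded comparison, which is exactly the paper's citation of Corollary 5.4 of \cite{BK}. The only quibble is that invoking Gabber's theorem is unnecessary (involutivity of the characteristic variety plays no role here); the equality $\mathrm{GK}\dim(\mathcal{U}_0/\mathcal{I}_0)=\dim\mathbf{V}(\mathcal{I}_0)$ needs only that $\gr(\mathcal{U}_0/\mathcal{I}_0)=S(\ggg_{\bar{0}})/\gr\mathcal{I}_0$ is a finitely generated commutative algebra, for which GK~dimension equals Krull dimension.
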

\begin{proof}
Note that we have the natural embedding $ \mathcal{U}_0/\mathcal{J}_{0}  \hookrightarrow \mathcal{U}/\mathcal{J}$. The first equality follows from the definition of Gelfand-Kirillov dimension (see \cite[pp.14 Definition]{KL} and the remark following it) and the PBW basis theorem.
The second equality follows from\cite[Corollary 5.4]{BK}.
\end{proof}

The following proposition is a super generalization of \cite[Theorem 1.2.2 (\rmnum{7})]{Lo10} in a special case.
\begin{prop}\label{finitefiberprop}
For any $ \mathcal{J} \in \mathrm{Prim}_{\mathbb{O}}(\mathcal{U})$,   $\{  \mathcal{I}  \in \mathfrak{id} (\mathcal{W}) \mid  \text{ $\mathcal{I} $ is prime, $ \mathcal{I}^{\dag}=\mathcal{J}$}\}$ is exactly the set consisting of the minimal prime ideals containing $\mathcal{J}_{\dag}$.
\end{prop}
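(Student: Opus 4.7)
The plan is to reduce the claim to the non-super analogue---Theorem 1.2.2(vii) of \cite{Lo10}---using the Clifford decomposition $\tilde{\mathcal{W}}=\mathrm{Cl}(V_{\bar{1}})\otimes\mathcal{W}$ of Theorem \ref{them1} together with Losev's Harish--Chandra bimodule formalism from \S3 of \cite{Lo11}.

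First I would transfer the statement from $\mathcal{W}$ to $\tilde{\mathcal{W}}$. Proposition \ref{coro2.4} gives a bijection $\mathcal{I}\mapsto \mathrm{Cl}(V_{\bar{1}})\otimes \mathcal{I}$ between two-sided ideals of $\mathcal{W}$ and of $\tilde{\mathcal{W}}$; since $\mathrm{Cl}(V_{\bar{1}})$ is a finite dimensional semisimple algebra, tensoring with it preserves primeness and the containment lattice. By Lemma \ref{lem2.4}, this bijection identifies $\mathcal{I}^{\dag}=\mathcal{J}$ with $(\mathrm{Cl}(V_{\bar{1}})\otimes \mathcal{I})^{\tilde{\dag}}=\mathcal{J}$ and sends $\mathcal{J}_{\dag}$ to $\mathcal{J}_{\tilde{\dag}}$. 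It therefore suffices to prove: the prime ideals $\tilde{\mathcal{I}}\subset\tilde{\mathcal{W}}$ with $\tilde{\mathcal{I}}^{\tilde{\dag}}=\mathcal{J}$ coincide with the minimal primes of $\tilde{\mathcal{W}}$ over $\mathcal{J}_{\tilde{\dag}}$.

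Next, viewing $\mathcal{U}$ as a HC $\mathcal{U}_0$-bimodule via the adjoint action (finitely generated and locally finite because $\dim\ggg_{\bar{1}}<\infty$), the maps $\bullet^{\tilde{\dag}}$ and $\bullet_{\tilde{\dag}}$ are those of \S3 \cite{Lo11} applied to this bimodule. Their adjunction-like property yields the inclusion $\mathcal{J}_{\tilde{\dag}}\subseteq \tilde{\mathcal{I}}$ whenever $\tilde{\mathcal{I}}^{\tilde{\dag}}=\mathcal{J}$, establishing one half of the claim. The quantitative core is a GK-dimension count: by Lemma \ref{legkdim} one has $\mathrm{GK}\dim(\mathcal{U}/\mathcal{J})=\dim\mathbb{O}$, and the functor $\bullet_{\tilde{\dag}}$ drops GK dimension by $\dim\mathbb{O}$ (imported from \cite{Lo11} via the Weyl-algebra decoupling of Proposition \ref{prop2.1}). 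Consequently $\tilde{\mathcal{W}}/\mathcal{J}_{\tilde{\dag}}$ is finite dimensional, forcing every prime over $\mathcal{J}_{\tilde{\dag}}$ to be automatically minimal and of finite codimension.

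The main obstacle I anticipate is the converse direction: for a minimal prime $\tilde{\mathcal{I}}$ over $\mathcal{J}_{\tilde{\dag}}$, showing the \emph{equality} $\tilde{\mathcal{I}}^{\tilde{\dag}}=\mathcal{J}$ rather than merely a containment or an equality of associated varieties. This will require the full strength of Losev's Theorem 4.1.1 \cite{Lo11} for HC bimodules, applied to $\mathcal{U}/\mathcal{J}$ as a HC $\mathcal{U}_0$-bimodule; one must carefully verify that the $\mathbb{Z}_2$-graded structure does not obstruct any of Losev's arguments, and that Lemma \ref{lem2.4} allows one to reconstruct $\mathcal{J}$ from the non-super data $(\tilde{\mathcal{I}}\cap\mathcal{W}_0)^{\dag}=\mathcal{J}\cap\mathcal{U}_0$ together with the $\ggg_{\bar{1}}$-bimodule structure.
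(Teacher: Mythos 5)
Your reduction to $\tilde{\mathcal{W}}$ via Proposition \ref{coro2.4} and Lemma \ref{lem2.4}, and your first direction, are sound and essentially match the paper: the containment $\mathcal{J}_{\dag}\subseteq\mathcal{I}$ for any prime $\mathcal{I}$ with $\mathcal{I}^{\dag}=\mathcal{J}$, together with the fact that $\mathcal{J}_{\dag}$ has finite codimension (the paper cites Proposition 4.6 of \cite{SX}, you import the GK-dimension drop for HC bimodules from \cite{Lo11}; same content), shows such $\mathcal{I}$ are minimal primes over $\mathcal{J}_{\dag}$. The genuine gap is the converse: you never prove that a minimal prime $\mathcal{I}$ over $\mathcal{J}_{\dag}$ satisfies $\mathcal{I}^{\dag}=\mathcal{J}$; you only flag it as "the main obstacle" and point to Theorem 4.1.1 of \cite{Lo11} with a promise to "carefully verify" that the grading causes no trouble. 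That theorem (the description of $(\bullet_{\dag})^{\dag}$ as an intersection over the component group) is the tool the paper uses later for Theorem \ref{mainthm}, not for this proposition, and invoking it here does not by itself produce the equality $\mathcal{I}^{\dag}=\mathcal{J}$; as written, the second half of the statement is unproved.

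The missing argument is a support-plus-primeness squeeze. From finite codimension of $\mathcal{J}_{\dag}$ one gets that $\mathcal{I}$, hence $\tilde{\mathcal{I}}=\mathrm{Cl}(V_{\bar{1}})\otimes\mathcal{I}$, hence $\tilde{\mathcal{I}}_{0}=\mathcal{W}_{0}\cap\tilde{\mathcal{I}}$, has finite codimension; then the identity $\mathcal{I}^{\dag}\cap\mathcal{U}_{0}=(\tilde{\mathcal{I}}_{0})^{\dag}$ together with the even-theory argument of Theorem 1.2.2(vii) in \cite{Lo10} shows that $\mathcal{I}^{\dag}$ is supported on $\bar{\mathbb{O}}$. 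On the other hand $\mathcal{J}_{\dag}\subseteq\mathcal{I}$ gives $\mathcal{J}\subseteq\mathcal{I}^{\dag}$, so by Lemma \ref{legkdim} both quotients $\mathcal{U}/\mathcal{J}$ and $\mathcal{U}/\mathcal{I}^{\dag}$ have Gelfand--Kirillov dimension $\dim\mathbb{O}$; since $\mathcal{U}/\mathcal{J}$ is prime, Corollary 3.6 of \cite{BK} (a proper quotient of a prime Noetherian algebra has strictly smaller GK dimension) forces $\mathcal{I}^{\dag}=\mathcal{J}$. Note in particular that the "equality of associated varieties" you dismiss as insufficient is exactly what suffices once primeness of $\mathcal{U}/\mathcal{J}$ and the Borho--Kraft inequality are brought in; no appeal to Theorem 4.1.1 of \cite{Lo11} is needed.
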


\begin{proof}
Suppose that $\mathcal{I}$ is a prime ideal of $\mathcal{W}$ with $\mathcal{I}^{\dag}=\mathcal{J}$.   \cite[Proposition 4.5]{SX} implies that  $\mathcal{J}_{\dag} \subset \mathcal{I} $.  So $\mathcal{I}$ has finite codimension in  $\mathcal{W}$.  Hence $\mathcal{I}$ is minimal by  \cite[Corollary 3.6]{BK}.
Let $\mathcal{I} \subset \mathcal{W}$ be a minimal prime ideal  with $ \mathcal{J}_{\dag}\subset \mathcal{I}$. According to \cite[Proposition 4.6]{SX}, $\mathcal{J}_\dag$ has finite codimension in  $\mathcal{W}$. Thus we can see that  $ \tilde{\mathcal{I}}=\mathrm{Cl}(V_{\bar{1}}) \otimes_{\mathbb{K}} \mathcal{I}$ has finite codimension in $\tilde{\mathcal{W}}$. Whence $\tilde{\mathcal{I}}_{0}=\mathcal{W}_{0} \cap \tilde{\mathcal{I}}$ has  finite  codimension in $\mathcal{W}_{0}$.  Since $\mathcal{I}^{\dag} \cap \mathcal{U}_{0}=(\tilde{\mathcal{I}}_{0})^{\tilde{\dag}}$, we obtain  that $\mathcal{I}^{\dag}$ is supported on $G_{\bar{0}}\cdot \chi$ by the proof of \cite[ Theorem 1.2.2 (\rmnum{7})]{Lo10}. Thus  Lemma \ref{legkdim}  in conjunction with \cite[ Corollary 3.6]{BK} yields $\mathcal{I}^{\dag}=\mathcal{J}$.
\end{proof}

Let $\sigma$ be the automorphism of superalgebra $\mathcal{A}=\mathcal{A}_{\bar{0}}+\mathcal{A}_{\bar{1}}$ given by $\sigma(x)=x_0-x_1$ for any $x=x_0+x_1$ in $\mathcal{A}$. An ideal of  $\mathcal{A}$ is $\mathbb{Z}_2$-graded 
if and only if it is invariant under $\sigma$. We have the following relation between primitive 
and graded primitive ideals of $\mathcal{A}$.
\begin{lemma}$($\cite[Lemma 7.6.3]{Mu12}$)$ \label{graded prime}
For any graded 	primitive ideal $\mathcal{I}^{'}$ of $\mathcal{A}$, there exists a primitive ideal $\mathcal{I} \subset \mathcal{A} $
such that $\mathcal{I}^{'}=\mathcal{I} \cap \sigma(\mathcal{I})$.
\end{lemma}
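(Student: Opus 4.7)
The plan is to realize $\mathcal{I}'$ as the annihilator of a simple supermodule $M$ and then extract a primitive ideal $\mathcal{I}$ from a simple ordinary quotient of $M$, engineered so that $\mathcal{I} \cap \sigma(\mathcal{I}) = \mathcal{I}'$. First I would pick a simple $\mathcal{A}$-supermodule $M$ with $\Ann_{\mathcal{A}}(M) = \mathcal{I}'$. For any nonzero $v \in M_{\bar{0}}$, the cyclic ordinary submodule $\mathcal{A} v$ is $\mathbb{Z}_{2}$-graded, so by the simplicity of $M$ as a supermodule one has $\mathcal{A} v = M$; in particular $M$ is cyclic as an ordinary $\mathcal{A}$-module. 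A Zorn's lemma argument then produces a maximal proper ordinary submodule $K \subset M$ and a simple ordinary quotient $L := M/K$. I set $\mathcal{I} := \Ann_{\mathcal{A}}(L)$, which is a primitive ideal by definition.

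Next, consider the parity involution $\sigma_{M} : M \to M$ defined by $\sigma_{M}(m_{\bar{0}} + m_{\bar{1}}) := m_{\bar{0}} - m_{\bar{1}}$. A short computation gives the twisted-equivariance identity $\sigma_{M}(a \cdot m) = \sigma(a) \cdot \sigma_{M}(m)$, which implies that $\sigma_{M}$ carries ordinary submodules of $M$ to ordinary submodules. In particular $\sigma_{M}(K)$ is a maximal proper ordinary submodule, and the induced bijection $\bar{\sigma}_{M} : L \to M/\sigma_{M}(K) =: L'$ satisfies $\bar{\sigma}_{M}(a \ell) = \sigma(a) \bar{\sigma}_{M}(\ell)$. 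It follows that $L'$ is a simple ordinary $\mathcal{A}$-module and that $\Ann_{\mathcal{A}}(L') = \sigma(\Ann_{\mathcal{A}}(L)) = \sigma(\mathcal{I})$.

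The key step is then to observe that $K \cap \sigma_{M}(K)$ is a $\sigma_{M}$-stable ordinary submodule, and hence $\mathbb{Z}_{2}$-graded as a subspace of $M$. Being properly contained in $M$ while $M$ is simple as a supermodule, it must vanish. Consequently the canonical map $M \hookrightarrow L \oplus L'$ is injective, and any element of $\mathcal{I} \cap \sigma(\mathcal{I})$ annihilates both summands, yielding $\mathcal{I} \cap \sigma(\mathcal{I}) \subseteq \Ann_{\mathcal{A}}(M) = \mathcal{I}'$; the reverse inclusion is immediate since $L$ and $L'$ are ordinary quotients of $M$. The main technical care lies in verifying the twisted equivariance of $\sigma_{M}$, which simultaneously underlies the identification $\Ann_{\mathcal{A}}(L') = \sigma(\mathcal{I})$ and the gradedness of $K \cap \sigma_{M}(K)$; with that identity in hand the remainder of the proof is bookkeeping.
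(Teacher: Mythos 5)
Your argument is correct, and there is nothing in the paper to compare it against: the lemma is quoted verbatim from Musson's book (Lemma 7.6.3 of [Mu12]) and the paper supplies no proof of its own. What you give is essentially the standard Clifford-theory argument for a $\mathbb{Z}_2$-grading, which is also the route taken in the cited source: realize $\mathcal{I}'=\Ann(M)$ for a graded-simple $M$, pass to a maximal ordinary submodule $K$, and use that $K\cap\sigma_M(K)$ is $\sigma_M$-stable, hence graded (here characteristic $0$, or at least $2$ invertible, is used), hence zero, so that $M$ embeds into $L\oplus L'$ with $\Ann(L)=\mathcal{I}$ and $\Ann(L')=\sigma(\mathcal{I})$. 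The only small slip is the choice ``nonzero $v\in M_{\bar 0}$'': for a general superalgebra a simple supermodule may have $M_{\bar 0}=0$, so you should take any nonzero homogeneous $v$; the rest of the argument (that $\mathcal{A}v$ is a nonzero graded submodule, hence all of $M$, giving cyclicity and then Zorn) goes through unchanged. With that adjustment the proof is complete and matches the textbook argument the paper relies on.
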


\subsection{Proof of main result Theorem \ref{mainthm}}

We prove the theorem  by a similar argument as in the proof of \cite[Conjecture 1.2.1]{Lo11}. Indeed,  given  $\mathcal{J} \in \mathrm{Prim}_{\mathbb{O}}(\mathcal{U})$, let $\mathcal{I}_{1}, \ldots, \mathcal{I}_{l}$ be the minimal prime ideals containing $\mathcal{J}_{\dag}$. Since $ \mathrm{Cl}(V_{\bar{1}})\otimes_{\mathbb{K}}\mathcal{I}_{1}$ is stable under $Q^{\circ}$,  $ \bigcap_{\gamma \in C_e} \gamma (\mathrm{Cl}(V_{\bar{1}}) \otimes_{\mathbb{K}} \mathcal{I}_{1})$ is $Q$-stable.  Set $\mathcal{J}^{1}= (\bigcap_{\gamma \in C_e} \gamma (\mathrm{Cl}(V_{\bar{1}}) \otimes_{\mathbb{K}} \mathcal{I}_{1}))^{\tilde{\dag}}$, then by \cite[Theorem 4.1.1]{Lo11} we have   $(\mathcal{J}^{1})_{\tilde{\dag}}= \bigcap_{\gamma \in C_e} \gamma (\mathrm{Cl}(V_{\bar{1}}) \otimes_{\mathbb{K}} \mathcal{I}_{1})$.
Thus  $\mathcal{J} =(\mathcal{I}_1)^{\dag} \supset \mathcal{J}^{1}  \supset  \mathcal{J} $  (the first equality follows from Lemma \ref{legkdim} and  \cite[Corollary 3.6]{BK}). 
Hence $\mathcal{J}_{\tilde{\dag}}=\bigcap_{\gamma \in C_e} \gamma (\mathrm{Cl}(V_{\bar{1}}) \otimes_{\mathbb{K}} \mathcal{I}_{1})$.  We obtain  that $\gamma (\mathrm{Cl}(V_{\bar{1}}) \otimes_{\mathbb{K}} \mathcal{I}_{1})=\mathrm{Cl}(V_{\bar{1}})\otimes_{\mathbb{K}} \mathcal{I}_{\gamma(1)}$ for some $\gamma(1) \in \{1,\ldots, l \}$ by  \cite[Proposition 3.1.10]{Di} and Corollary \ref{coro2.4}.  Thus we have
$\mathcal{I}=\bigcap_{\gamma \in C_e}\mathcal{I}_{\gamma(1)}$ by \cite[Proposition 3.1.10]{Di} and Lemma \ref{lem2.4}.
Now the proof is completed by Proposition \ref{finitefiberprop}.

In the $\mathbb{Z}_2$-graded case,  the automorphism given by 
$g \in Q$ commutes with $\sigma$. Thus the second statement follows from the first one and Lemma \ref{graded prime}. $\hfill\Box$
                                             
 \subsection{ Finite dimensional representations of $\mathcal{\tilde{W}}$ }\label{sec2.5}                                                                                                                                            
 
  Now we point out the role of Theorem \ref{mainthm}  in describing  $\mathrm{Irr}^{\mathrm{fin}}(\mathcal{\tilde{W}})$.  We mentioned earlier that the map  $$ \mathrm{Irr}^{\mathrm{fin}}(\mathcal{\tilde{W}}) \rightarrow \Prim^{\fin}(\mathcal{\tilde{W}}): M \mapsto \Ann(M) $$                                                                                             
 is bijective.  Given $\mathcal{I} \in \Prim^{\fin}(\mathcal{\tilde{W}})$,  $\mathcal{\tilde{W}}/\mathcal{I}$ is isomorphic to $\End(M)$ for some finite dimensional vector space $M$ over $\mathbb{K}$ by a well-known fact for general finite dimensional simple algebras. The inverse of the above map is given by $I \mapsto M$. 
 By Lemma \ref{graded prime} there is a similar bijection in the $\mathbb{Z}_2$-graded case.
 
Now let $M \in \mathrm{Irr}^{\mathrm{fin}}(\mathcal{\tilde{W}})$ and $\mathcal{I}=\Ann(M)$. If  $g \in C_e=Q/Q^{\circ}$ and  $g' \in Q$ is a representative of $g$,   $ ^gM$ denotes the twist of $M$ by the algebra automorphism $g'$ of $\mathcal{\tilde{W}}$. Obviously, the annihilator of $ ^gM$  is $g\cdot \mathcal{I}$. Thus Theorem \ref{mainthm} is 
equivalent to saying that $ \{  ^gM | g \in C_e \}$ equals the set of modules in $\mathrm{Irr}^{\mathrm{fin}}(\mathcal{\tilde{W}})$ which are
annihilated by $(\mathcal{I}^{\tilde{\dag}})_{\tilde{\dag}}$.
 
\subsection{ In the special  case:  $C_e=1$} \label{2.4}

For a basic Lie superalgebra $\ggg=\ggg_{\bar{0}}+\ggg_{\bar{1}}$ of type \Rmnum{1}, Lezter established a bijection $\nu :  \mathrm{Prim} (\mathcal{U}_{0})\rightarrow  \mathrm{Prim} (\mathcal{U})$.
It follows from the construction that $\nu$ restricts to a  bijection between
$  \mathrm{Prim}_{\mathbb{O}} (\mathcal{U}_{0})$ and $  \mathrm{ Prim} _{\mathbb{O}}(\mathcal{U})$.
So  we can describe  $\mathrm{Irr}^{\mathrm{fin}}(\mathcal{W})$ when $C_e$ is trivial.  We know that  the finite group  $C_e$ is trivial when $\ggg$ is of  type $A(m|n)$  or  $e$ is a principal nilpotent element in the type 
$C(n)$ Lie superalgebras.  In the case of  $\ggg=\mathfrak{osp}(1,2n)$, a  description of  $\mathrm{Prim}(\mathcal{U})$   is given in  \cite[Theorem A, B ]{Mu97}.  The poset structure describing  $\mathrm{Prim}(\mathcal{U})$   is  exactly the same as  that  of $\mathrm{Prim} (\mathcal{U}_{0})$.   It is straightforward to check that  $\widehat{L}(\lambda)$ is supported on $\bar{\mathbb{O}}$ if and only if so is $L(\lambda)$. Thus we show that Theorem \ref{mainthm} gives a description  of $\mathrm{Irr}^{\mathrm{fin}}(\mathcal{W})$ provided $C_{e}=1$.


\section{ Graded irreducible representations}

From now on, let $\ggg$ be a basic Lie superalgebra of type \Rmnum{1}. 
The most essential feature is that they  admit a $\mathbb{Z}_2$-compatible $\mathbb{Z}$-grading
$$\ggg=\ggg_{-1} \oplus \ggg_0 \oplus \ggg_{1}$$ of Lie superalgebras.  Here the term $\mathbb{Z}_2$-compatible means $\ggg_{-1}\oplus\ggg_{1}=\ggg_{\bar{1}}$ and $ \ggg_0=\ggg_{\bar{0}}$.
For a $\ggg_{\bar{0}}$-module $V$, view it as a  $\ggg_0+\ggg_{1}$-module with the trivial $\ggg_{1}$ action and define 
$K(V)=\mathrm{ind}_{\ggg_0+\ggg_{1}}^{\ggg} V$.
 We refer to $K(\bullet)$ as the  Kac functor from the category of $\ggg_{\bar{0}}$-modules to that of $\ggg$-supermodules. 
 The main result of \cite{CM} states  that, for any simple $\ggg_0$-supermodule $V$, the Kac module $K(V)$ has a unique simple $\mathbb{Z}_2$-graded quotient  $\widehat{V}$, and the map $V \mapsto \widehat{V}$ induces  a bijection between the set of isomorphism classes of  simple $\ggg_0$-modules and of simple $\ggg$-supermodules.  It is well known that the above map sends the highest weight simple $\mathcal{U}_{0}$-module $L(\lambda)$ to the highest weight simple $\mathcal{U}$-module $\widehat{L}(\lambda)$.  Now we could give a classification of simple $\tilde{\mathcal{W}}$-supermodules (hence of $\mathcal{W}$-supermodules) via the Kac equivalence and  Skryabin's equivalence.

A $\ggg$-supermodule $M$ is called Whittaker if $\mathfrak{m}_{\bar{0},\chi}$ acts on it as a locally nilpotent endomorphism. It is 
easy to check that $\widetilde{\mathrm{Wh}}(M):=M^{\mathfrak{m}_{\bar{0},\chi}}$ is a $\tilde{\mathcal{W}}$-supermodule.
Let $\tilde{Q}_{\chi}$ be the left $\mathcal{U}$-module $\mathcal{U}/\mathcal{U}\mathfrak{m}_{\bar{0},\chi}$. It also has a right 
$\tilde{\mathcal{W}}$-supermodule structure. For any $\tilde{\mathcal{W}}$-supermodule $N$, $\tilde{Q}_{\chi}\otimes_{\tilde{\mathcal{W}}}N$ 
is a left $\mathcal{U}$-supermodule. Let  $Q_{0,\chi}=\mathcal{U}_0/\mathcal{U}_0\mathfrak{m}_{\bar{0},\chi}$ be the $(\mathcal{U}_0,\mathcal{W}_0)$-bimodule defined similarly. We have the following Skryabin's equivalence for $\tilde{\mathcal{W}}$.
\begin{theorem}\label{Skrabin equi thm}
	The functor $\widetilde{\mathrm{Wh}}$ and $Q_\chi \otimes_{\tilde{\mathcal{W}}} \bullet $ are mutual quasi-equivalences between the categories of
	$\tilde{\mathcal{W}}$-supermodules and  of Whittaker $\mathcal{U}$-supermodules. 
 For any $\tilde{\mathcal{W}}$-supermodule $N$,   $Q_{0,\chi} \otimes_{\mathcal{W}_0} N $ also has a $\mathcal{U}$-supermodule  structure, which is isomorphic to $\tilde{Q}_{\chi} \otimes_{\tilde{\mathcal{W}}}N$. 
\end{theorem}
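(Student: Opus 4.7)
The plan is to reduce the theorem to the classical (even) Skryabin equivalence between $\mathcal{W}_0$-modules and Whittaker $\mathcal{U}_0$-modules, using the structural relation between $\tilde{\mathcal{W}}$ and $\mathcal{W}_0$ supplied by Theorem \ref{them1}. First, I would identify $\tilde{\mathcal{W}}$ with the super quantum Hamiltonian reduction $(\mathcal{U}/\mathcal{U}\mathfrak{m}_{\bar{0},\chi})^{\mathfrak{m}_{\bar{0}}}$. Since $\mathfrak{m}_{\bar{0},\chi}$ is purely even, this reduction is compatible both with the even reduction producing $\mathcal{W}_0$ and with the embedding $\mathcal{W}_0\hookrightarrow\tilde{\mathcal{W}}$ from Theorem \ref{them1}(1). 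Under this realization, $\widetilde{\mathrm{Wh}}(M)=M^{\mathfrak{m}_{\bar{0},\chi}}$ carries the natural $\tilde{\mathcal{W}}$-supermodule structure $\bar{u}\cdot m:=um$, and $\tilde{Q}_\chi$ becomes a $(\mathcal{U},\tilde{\mathcal{W}})$-bimodule.

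Next I would combine the super PBW decomposition $\mathcal{U}\cong\mathcal{U}_0\otimes\wedge^{\bullet}(\ggg_{\bar{1}})$, which gives $\tilde{Q}_\chi\cong\wedge^{\bullet}(\ggg_{\bar{1}})\otimes Q_{0,\chi}$ as left $\mathcal{U}_0$- and right $\mathcal{W}_0$-modules, with a parallel PBW-type splitting $\tilde{\mathcal{W}}\cong\wedge^{\bullet}(\ggg_{\bar{1}})\otimes\mathcal{W}_0$ as right $\mathcal{W}_0$-modules, obtained from Theorem \ref{them1}(1) by the standard filtration argument (Rees construction). Choosing the $\dim(\ggg_{\bar{1}})$ odd generators of $\tilde{\mathcal{W}}$ over $\mathcal{W}_0$ so that their principal symbols match a basis of $\ggg_{\bar{1}}$ used in the PBW of $\mathcal{U}$ over $\mathcal{U}_0$, the matching copies of $\wedge^{\bullet}(\ggg_{\bar{1}})$ cancel and for any $\tilde{\mathcal{W}}$-supermodule $N$ one obtains a canonical isomorphism
\[
\tilde{Q}_\chi\otimes_{\tilde{\mathcal{W}}} N\;\cong\;Q_{0,\chi}\otimes_{\mathcal{W}_0} N
\]
of $\mathcal{U}_0$-supermodules. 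This isomorphism transports the $\mathcal{U}$-supermodule structure from the left-hand side to $Q_{0,\chi}\otimes_{\mathcal{W}_0} N$, giving the second assertion of the theorem.

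With this identification in hand, the equivalence statement follows by transporting the classical even Skryabin equivalence. For the counit I would compute
\[
\widetilde{\mathrm{Wh}}\bigl(\tilde{Q}_\chi\otimes_{\tilde{\mathcal{W}}} N\bigr)\;\cong\;(Q_{0,\chi}\otimes_{\mathcal{W}_0} N)^{\mathfrak{m}_{\bar{0},\chi}}\;\cong\;N,
\]
where the second isomorphism is the classical Skryabin equivalence applied to the $\mathcal{W}_0$-module underlying $N$; one then checks that the $\tilde{\mathcal{W}}$-supermodule structure on the invariants matches the original $\tilde{\mathcal{W}}$-structure on $N$, since both are governed by the same odd generators of $\tilde{\mathcal{W}}$. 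The unit $M\cong\tilde{Q}_\chi\otimes_{\tilde{\mathcal{W}}}\widetilde{\mathrm{Wh}}(M)$ is handled dually, using that every Whittaker $\mathcal{U}$-supermodule is a fortiori a Whittaker $\mathcal{U}_0$-module after restriction.

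The main obstacle I anticipate is securing the compatible PBW-type splittings of $\tilde{\mathcal{W}}$ over $\mathcal{W}_0$ and of $\tilde{Q}_\chi$ over $Q_{0,\chi}$ along the same basis of $\wedge^{\bullet}(\ggg_{\bar{1}})$: one must choose the odd generators of $\tilde{\mathcal{W}}$ so that the canonical map $\tilde{\mathcal{W}}\otimes_{\mathcal{W}_0} Q_{0,\chi}\to\tilde{Q}_\chi$ induced by the bimodule structure is bijective in a way that matches the $\mathcal{U}\otimes_{\mathcal{U}_0}Q_{0,\chi}\cong\tilde{Q}_\chi$ identification. Once this compatibility is established, which should follow from tracking the Rees construction in the proof of Theorem \ref{them1} in \cite{SX}, the remaining steps are routine adjunction and invariance bookkeeping parallel to the classical Skryabin argument.
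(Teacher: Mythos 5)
Your proposal is correct in outline, but it reaches the key identification $\tilde{Q}_{\chi}\otimes_{\tilde{\mathcal{W}}}N\cong Q_{0,\chi}\otimes_{\mathcal{W}_0}N$ by a different mechanism than the paper. The paper's proof stays inside Losev's completion formalism: it asserts a super analog of Theorem 1.2.1 of \cite{Lo10}, namely an isomorphism of topological algebras between the completion $\mathcal{U}^{\wedge}_{\mathfrak{m}_{\bar{0},\chi}}$ and the completion of $\mathbf{A}(V_{\bar{0}})\otimes\tilde{\mathcal{W}}$ along $\mathfrak{m}_{\bar{0}}$, and then quotes the bimodule identifications $Q_{0,\chi}\cong\mathbb{K}[\mathfrak{m}_{\bar{0}}]\otimes\mathcal{W}_0$ and $\tilde{Q}_{\chi}\cong\mathbb{K}[\mathfrak{m}_{\bar{0}}]\otimes\tilde{\mathcal{W}}$ from \cite{Lo10}; this makes $Q_{0,\chi}\otimes_{\mathcal{W}_0}N=\mathbb{K}[\mathfrak{m}_{\bar{0}}]\otimes N$ an $\mathbf{A}(V_{\bar{0}})\otimes\tilde{\mathcal{W}}$-module, hence a Whittaker $\mathcal{U}$-supermodule via $\mathcal{U}\hookrightarrow\mathcal{U}^{\wedge}_{\mathfrak{m}_{\bar{0},\chi}}$, and the equivalence is then obtained by rerunning the Skryabin-type argument of Theorem 4.1 of \cite{SX}. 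You instead treat the super directions as a finite free extension: $\mathcal{U}\cong\bigwedge(\ggg_{\bar{1}})\otimes\mathcal{U}_0$ and $\tilde{\mathcal{W}}\cong\bigwedge(\ggg_{\bar{1}})\otimes\mathcal{W}_0$ (relative PBW), reduce everything to the bijectivity of the multiplication map $Q_{0,\chi}\otimes_{\mathcal{W}_0}\tilde{\mathcal{W}}\rightarrow\tilde{Q}_{\chi}$, and then invoke the even Skryabin equivalence for $(\mathcal{U}_0,\mathcal{W}_0)$ as a black box for both the unit and counit. Your route buys a more elementary, self-contained argument given the even theory (the cancellation $\tilde{Q}_{\chi}\otimes_{\tilde{\mathcal{W}}}N\cong Q_{0,\chi}\otimes_{\mathcal{W}_0}\tilde{\mathcal{W}}\otimes_{\tilde{\mathcal{W}}}N$ is clean, and your check that the $\tilde{\mathcal{W}}$-action on Whittaker vectors matches is the right verification), while the paper's route buys compatibility with the machinery it needs anyway: the completed decomposition is what defines $\tilde{\mathcal{W}}$, supplies its module structure on $M^{\mathfrak{m}_{\bar{0},\chi}}$, and carries the equivariance used later for the Soergel functor.

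Two caveats. First, your crux — choosing the odd generators so that $Q_{0,\chi}\otimes_{\mathcal{W}_0}\tilde{\mathcal{W}}\rightarrow\tilde{Q}_{\chi}$ is bijective — is genuinely where the work lies; it does go through, since the symbols of the odd PBW generators of $\tilde{\mathcal{W}}$ over $\mathcal{W}_0$ span a complement isomorphic to $\ggg_{\bar{1}}$ and the associated graded map becomes $\mathbb{K}[\chi+\mathfrak{m}_{\bar{0}}^{\perp}\cap\ggg_{\bar{0}}^{*}]\otimes\bigwedge(\ggg_{\bar{1}})\rightarrow\mathbb{K}[\chi+\mathfrak{m}_{\bar{0}}^{\perp}]$, an isomorphism; but you should carry out this Kazhdan-filtration argument rather than only pointing to the Rees construction. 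Second, your starting identification of $\tilde{\mathcal{W}}$ with $(\mathcal{U}/\mathcal{U}\mathfrak{m}_{\bar{0},\chi})^{\mathrm{ad}\,\mathfrak{m}_{\bar{0}}}$ is itself the super analog of Losev's Theorem 1.2.1, i.e. exactly the input the paper's proof claims as its first step, so your argument does not fully avoid that ingredient — it only relocates it.
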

The second statement is very useful in our study of representations of $\tilde{\mathcal{W}}$. It enables us to use results on $\mathcal{W}_{0}$. We may prove the theorem by a similar argument in the  W-algebra cases; see \cite{Lo11} or  \cite{SX} in W-superalgebra cases. Here we provide a sketch to prove it.
\begin{proof}
	
Let $\mathbf{A}_{V_{\bar{0}}}(\tilde{\mathcal{W}})=\mathbf{A}(V_{\bar{0}}) \otimes_{\mathbb{K}} \tilde{\mathcal{W}}$. 
We claim that there is an isomorphism $\mathcal{U}^{\wedge}_{\mathfrak{m}_{\bar{0},\chi}} \rightarrow \mathbf(A)(\tilde{\mathcal{W}})^{\wedge}_{\mathfrak{m}_{\bar{0}}}$ of topological algebras, 
 where $\mathcal{U}^{\wedge}_{\mathfrak{m}_{\bar{0},\chi}}$(resp. $\mathbf{A}_{V_{\bar{0}}}(\tilde{\mathcal{W}})^{\wedge}_{\mathfrak{m}_{\bar{0}}}$) is the completion of $\mathcal{U}$(resp. $\mathbf{A}_{V_{\bar{0}}}(\tilde{\mathcal{W}})$) with respect to the nilpotent Lie subalgebra $\mathfrak{m}_{\bar{0},\chi} \subset \mathcal{U} $ (resp. commutative subalgebra $\mathfrak{m}_{\bar{0}}$). This is an analog of  \cite[Theroem 1.2.1]{Lo10}  for $\mathcal{W}_{0}$, which states that  $(\mathcal{U}_0)^{\wedge}_{\mathfrak{m}_{\bar{0},\chi}}$ is isomorphic to  $(\mathbf{A}(V_{\bar{0}}) \otimes_{\mathbb{K}} \mathcal{W}_0)^{\wedge}_{\mathfrak{m}_{\bar{0}}} $ as topological algebras. Our claim can be proved by the similar arguments therein.

 View  $Q_{0,\chi}$ as an $\mathbf{A}(V_{\bar{0}}) \otimes_{\mathbb{K}} \mathcal{W}_0$-module via the above second isomorphism,  then we have  
 $Q_{0,\chi}=\mathbb{K}[\mathfrak{m}_{\bar{0}}] \otimes_{\mathbb{K}} \mathcal{W}_0$ as ($\mathbf{A}(V_{\bar{0}}) \otimes_{\mathbb{K}} \mathcal{W}_0,\mathcal{W}_{0})$-bimodules; see \cite[pp.52]{Lo10}). Similarly we have  $\tilde{Q}_{\chi}=\mathbb{K}[\mathfrak{m}_{\bar{0}}] \otimes_{\mathbb{K}} \tilde{\mathcal{W}}$ as $(\mathbf{A}_{V_{\bar{0}}}(\tilde{\mathcal{W}}),\tilde{\mathcal{W}})$-bimodules. Therefore   
 $$Q_{0,\chi} \otimes_{\mathcal{W}_0}N= (\mathbb{K}[\mathfrak{m}_{\bar{0}}] \otimes_{\mathbb{K}} \mathcal{W}_0) \otimes_{\mathcal{W}_0} N=\mathbb{K}[\mathfrak{m}_{\bar{0}}] \otimes_{\mathbb{K}} N $$ has an
 $\mathbf{A}_{V_{\bar{0}}}(\tilde{\mathcal{W}})$-supermodule structure. 
 Hence it is a 
Whittaker $\mathcal{U}$-supermodule via the homomorphism  $\mathcal{U} \hookrightarrow  \mathcal{U}^{\wedge}_{\mathfrak{m}_{\bar{0},\chi}} \rightarrow \mathbf{A}_{V_{\bar{0}}}(\tilde{\mathcal{W}})^{\wedge}_{\mathfrak{m}_{\bar{0}}}$. Repeating the argument of \cite[Theorem 4.1]{SX}, the theorem follows.  
 \end{proof} 

\begin{theorem}\label{Classification Thm}
	The sets 	
	$\mathrm{Irr}(\mathcal{W}_0)$, $\gr.\mathrm{Irr}(\tilde{\mathcal{W}})$ and $\gr.\mathrm{Irr}(\mathcal{W})$ are bijective with each other. Any simple $\tilde{\mathcal{W}}$-supermodule, or equivalently simple $\mathcal{W}$-supermodule is $\mathbb{Z}$-gradable. 
\end{theorem}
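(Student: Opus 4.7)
The plan is to compose three equivalences: the Kac bijection from \cite{CM} between simple $\ggg_{\bar{0}}$-modules and simple $\ggg$-supermodules, Skryabin's equivalence Theorem \ref{Skrabin equi thm} for $\tilde{\mathcal{W}}$ (together with its even counterpart for $\mathcal{W}_0$), and the graded form of Proposition \ref{prop2.5} relating $\tilde{\mathcal{W}}$- and $\mathcal{W}$-supermodules.

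First, Theorem \ref{Skrabin equi thm} identifies $\gr.\mathrm{Irr}(\tilde{\mathcal{W}})$ with the isomorphism classes of simple Whittaker $\mathcal{U}$-supermodules, while its even analogue (recalled from \cite{Lo10}) identifies $\gr.\mathrm{Irr}(\mathcal{W}_0)$ with simple Whittaker $\mathcal{U}_0$-modules placed trivially in degree $\bar{0}$. Next, I would invoke the Kac correspondence $V \mapsto \widehat{V}$ of \cite{CM} and verify that it preserves the Whittaker property. In one direction, the PBW decomposition $K(V) = \bigwedge(\ggg_{-1}) \otimes V$ as $\ggg_{\bar{0}}$-modules, together with the finite dimensionality of $\bigwedge(\ggg_{-1})$, shows that local nilpotence of $\mathfrak{m}_{\bar{0},\chi}$ on $V$ lifts to $K(V)$ and descends to its simple quotient $\widehat{V}$. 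The reverse direction requires recovering $V$ as a $\ggg_{\bar{0}}$-subquotient of $\widehat{V}$, for which the type I $\mathbb{Z}$-grading $\ggg = \ggg_{-1} \oplus \ggg_0 \oplus \ggg_1$ is essential. Composing the three equivalences yields the bijection $\gr.\mathrm{Irr}(\mathcal{W}_0) \leftrightarrow \gr.\mathrm{Irr}(\tilde{\mathcal{W}})$.

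For the bijection with $\gr.\mathrm{Irr}(\mathcal{W})$, I would upgrade Proposition \ref{prop2.5} to the $\mathbb{Z}_2$-graded setting. Since the isomorphism $\tilde{\mathcal{W}} \cong \mathrm{Cl}(V_{\bar{1}}) \otimes \mathcal{W}$ of Theorem \ref{them1} is $\mathbb{Z}_2$-homogeneous and the Clifford action on $\bigwedge(\mathfrak{u}_{\bar{1}}^*)$ is likewise homogeneous, the functor $N \mapsto \bigwedge(\mathfrak{u}_{\bar{1}}^*) \otimes N$ restricts to an equivalence of graded module categories and therefore induces a bijection on graded simples. The $\mathbb{Z}_2$-gradability assertion then follows: every simple $\mathcal{W}_0$-module carries a trivial $\mathbb{Z}_2$-grading by placement in degree $\bar{0}$, and this grading propagates through the chain of equivalences to endow every corresponding simple $\tilde{\mathcal{W}}$- and $\mathcal{W}$-module with a compatible $\mathbb{Z}_2$-structure.

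The main obstacle will be verifying Whittaker compatibility of the Kac correspondence, specifically the subtle direction that Whittaker on $\widehat{V}$ forces $V$ to be Whittaker. I expect to use that the extremal $\mathbb{Z}$-weight space of $\widehat{V}$ (with respect to the $\ggg_{-1} \oplus \ggg_0 \oplus \ggg_1$ grading) is naturally isomorphic to $V$ as a $\ggg_{\bar{0}}$-module, so that local nilpotence of $\mathfrak{m}_{\bar{0},\chi}$ on $\widehat{V}$ restricts to local nilpotence on this copy of $V$.
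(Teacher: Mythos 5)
Your construction of the bijection is essentially the paper's own route: the paper's map is $N \mapsto \widetilde{\mathrm{Wh}}\bigl(\widehat{Q_{0,\chi}\otimes_{\mathcal{W}_0}N}\bigr)$, which is exactly your composition of the even Skryabin equivalence, the Kac correspondence of \cite{CM} restricted to Whittaker objects, and the super Skryabin equivalence of Theorem \ref{Skrabin equi thm}; the passage from $\tilde{\mathcal{W}}$ to $\mathcal{W}$ via the graded form of Theorem \ref{them1}(2) and Proposition \ref{prop2.5} is also the intended one, and your attention to the reverse Whittaker direction (recovering $V$ as the extremal graded piece of $\widehat{V}$) fills in a point the paper passes over with ``obviously.''

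The genuine gap is in the last assertion. The theorem claims that every simple $\tilde{\mathcal{W}}$-supermodule (equivalently, $\mathcal{W}$-supermodule) is $\mathbb{Z}$-gradable, not $\mathbb{Z}_2$-gradable; since in this paper a supermodule is by definition a $\mathbb{Z}_2$-graded module, the $\mathbb{Z}_2$-statement you prove is vacuous and misses the actual content. The relevant grading is the one induced by the type \Rmnum{1} decomposition $\ggg=\ggg_{-1}\oplus\ggg_{0}\oplus\ggg_{1}$, which makes $\mathcal{U}$, and hence $\tilde{\mathcal{W}}$, into $\mathbb{Z}$-graded algebras (this is the grading set up at the start of \S\ref{sec cher for}). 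The paper's argument is: by (the proof of) Theorem 4.1 in \cite{CM}, every simple $\ggg$-supermodule is $\mathbb{Z}$-gradable; since $\mathfrak{m}_{\bar{0},\chi}\subset\mathcal{U}$ is $\mathbb{Z}$-homogeneous (it lies in degree $0$), the space of Whittaker vectors $\widetilde{\mathrm{Wh}}(M)=M^{\mathfrak{m}_{\bar{0},\chi}}$ of a $\mathbb{Z}$-graded simple Whittaker supermodule is a $\mathbb{Z}$-graded subspace, so the corresponding simple $\tilde{\mathcal{W}}$-supermodule inherits a $\mathbb{Z}$-grading. This is not a cosmetic point: the $\mathbb{Z}$-gradability is exactly what is invoked later, in the proof of Proposition \ref{Triangular Decom}(3), to assume that a simple quotient of the Verma module $V^{K}_{\tilde{\mathcal{W}}}(N)$ carries a $\mathbb{Z}$-grading with top degree $0$; your proposal as written would not support that step.
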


\begin{proof}
	Obviously, the Kac functor 
	maps the Whittaker $\ggg_{\bar{0}}$-modules to the Whittaker $\ggg$-supermodules. 
	According to  Theorem \ref{Skrabin equi thm}, we have that 
	the map $N \mapsto \widehat{N}:= \widetilde{\mathrm{Wh}} (\widehat{Q_{\chi,0} \otimes_{\mathcal{W}_0} N})$ is a bijection between $\mathrm{Irr}(\mathcal{W}_0)$ and $\gr.\mathrm{Irr}(\tilde{\mathcal{W}})$. Since $\mathfrak{m}_{\bar{0},\chi} \subset \mathcal{U}$ is $\mathbb{Z}$-homogeneous, the second statement follows from the fact that any simple $\ggg$-supermodule is $\mathbb{Z}$-gradable; see the proof of  \cite[Theorem 4.1]{CM}.
\end{proof}

\section{Character formula} \label{sec cher for}

\subsection{Triangular decomposition for $\mathcal{\tilde{W}}$}

Let $\mathcal{U}_{+}$(resp. $\mathcal{U}_{-}$) be the universal enveloping algebra of 
$\ggg_{0}+\ggg_{1}$ (resp. $\ggg_{0}+\ggg_{-1}$). Define their completion $(\mathcal{U}_{+})_{\hbar}^{\wedge}$ and $(\mathcal{U}_{-})_{\hbar}^{\wedge}$ similarly to
 $\mathcal{U}_{\hbar}^{\wedge}$.  The restrictions of $\tilde{\Phi}_\hbar$ to $(\mathcal{U}_{+})_{\hbar}^{\wedge}$ and $(\mathcal{U}_{-})_{\hbar}^{\wedge}$ give the following isomorphisms 
 $$\tilde{\Phi}_{\hbar}^+:  \mathbf{A}_{\hbar}^{\wedge}(V_{\bar{0}}) \otimes \mathcal{\tilde{W}}_{+,\hbar}^{\wedge} \longrightarrow(\mathcal{U}_{+})_{\hbar}^{\wedge}
 \text{ and }
\tilde{\Phi}_{\hbar}^-:  \mathbf{A}_{\hbar}^{\wedge}(V_{\bar{0}}) \otimes \mathcal{\tilde{W}}_{-,\hbar}^{\wedge} \longrightarrow(\mathcal{U}_{-})_{\hbar}^{\wedge}$$
of associative algebras. Here $\mathcal{\tilde{W}}_{+,\hbar}^{\wedge}$ and $\mathcal{\tilde{W}}_{-,\hbar}^{\wedge}$ are defined similarly to $\mathcal{\tilde{W}}_{\hbar}^{\wedge}$ in Proposition \ref{prop2.1}. 
Define $\mathcal{\tilde{W}}_{-}:=(\mathcal{\tilde{W}}_{-,\hbar}^{\wedge})_{\mathbb{K}^*-l.f}/(\hbar-1)$
and $\mathcal{\tilde{W}}_{+}:=(\mathcal{\tilde{W}}_{+,\hbar}^{\wedge})_{\mathbb{K}^*-l.f}/(\hbar-1)$.
They can be viewed as the W-superalgebras from $(\ggg_{-1}+\ggg_0,e)$ and  $(\ggg_{0}+\ggg_1,e)$.

Equip  $\mathcal{U}_{\hbar}$ a $\mathbb{Z}$-grading such that the subspace $\mathcal{U}$ has the natural grading from $\ggg$ and $\hbar$ has  the grading 
$0$. The isomorphism $\tilde{\Phi}_{\hbar}$  preserves the
$\mathbb{Z}$-grading by construction. Hence there is a $\mathbb{Z}$-grading $\mathcal{\tilde{W}}=\bigoplus_{i \in \mathbb{Z} } \mathcal{\tilde{W}}$ inherited from the one on $\mathcal{U}$, and $\mathcal{\tilde{W}}_{-}$ and $\mathcal{\tilde{W}}_{+}$ are $\mathbb{Z}$-graded subalgebras 
of $\mathcal{\tilde{W}}$.  

\begin{prop}\label{Triangular Decom}
	\begin{itemize}	
\item[(1)] There exist $\mathbb{Z}$-homogeneous odd elements $x^{-}_1,\ldots,x^{-}_k  \in \mathcal{\tilde{W}}_{-}$, $x^{+}_1,\ldots,x^{+}_k  \in \mathcal{\tilde{W}}_{+}$ and
 $x_1,\ldots,x_l  \in \mathcal{W}_0$ 
 such that they form a PBW basis of $\mathcal{\tilde{W}}$ in the super sense.
Where $k=\dim(\ggg_{-1})=\dim(\ggg_{1})$ and $l=\dim((\ggg_{\bar{0}})_e)$. We emphasize that $\mathcal{W}_0$
is the ordinary finite W-algebra from  $(\ggg_{\bar{0}},e)$.
	 
\item[(2)]
Let $\mathcal{\tilde{W}}_{-}^{\#}$ (resp. $\mathcal{\tilde{W}}_{+}^{\#}$) be the vector space of  exterior algebra generated by $x^{-}_1,\ldots,x^{-}_k$
(resp. $x^{+}_1,\ldots,x^{+}_k$). There  are isomorphism of vector spaces
\begin{equation}\label{trian decom}
\mathcal{\tilde{W}}\simeq \mathcal{\tilde{W}}_{+}^{\#}\otimes_{\mathbb{K}}\mathcal{W}_0\otimes_{\mathbb{K}}  \mathcal{\tilde{W}}_{-}^{\#},
\mathcal{\tilde{W}}_{+}\simeq \mathcal{W}_0 \otimes_{\mathbb{K}} \mathcal{\tilde{W}}_{+}^{\#},  \mathcal{\tilde{W}}_{-}\simeq\mathcal{W}_0 \otimes_{\mathbb{K}} \mathcal{\tilde{W}}_{-}^{\#}
\end{equation}
given by the multiplication of $\mathcal{\tilde{W}}$.   
\item[(3)]  For any irreducible $\mathcal{W}_0$-module $N$, view it as a $\mathcal{\tilde{W}}_{+}$-module via the quotient $\mathcal{\tilde{W}}_{+} \twoheadrightarrow \mathcal{W}_0$ modulo the two-sided ideal generated by elements with positive $\mathbb{Z}$-grading (or by the image of $\mathcal{\tilde{W}}_{+}^{\#}$ equivalently). Then the Verma module $\Delta^{K}_{\mathcal{\tilde{W}}}(N):=  \mathcal{\tilde{W}} \otimes_{\mathcal{\tilde{W}}_{+}} N $ has a unique simple quotient $L^{K}_{\mathcal{\tilde{W}}}(N)$. The map $ \mathrm{Irr}^{\mathrm{fin}}(\mathcal{W}_0) \rightarrow \gr.\mathrm{Irr}^{\mathrm{fin}}( \tilde{\mathcal{W}}): N \mapsto L^{K}_{\mathcal{\tilde{W}}}(N)$ is bijective and $C_e$-equivariant.
\end{itemize}
\end{prop}

\begin{proof} 
Statement (1) follows from a similar argument as the proof of existence of PBW  basis for $\mathcal{W}_0$
in \cite{Lo10} or for $\mathcal{W}$ in \cite{SX}. Let $\tilde{S}_e = (\ggg_{\bar{0}})_e\oplus \ggg_{\bar{1}} $ and choose odd elements $'x^{-}_1,\ldots,'x^{-}_k  \in \ggg_{-1} $,  $'x^{+}_1,\ldots,'x^{+}_k  \in \ggg_{1}$ and
$'x_1,\ldots,'x_l  \in (\ggg_{\bar{0}})_e $ 
such that they form a basis of the vector space $\tilde{S}_e$. The procedure in 
 \cite[(2.3)]{SX} shows that $(x^{-}_1)_{\hbar}:='x^{-}_1+ \square $ is in  $\tilde{\Phi}_{\hbar}(\tilde{\mathcal{W}}^{\wedge}_{\hbar})$. 
Here  $\square$ denotes the higher order correcting term obtained in there. We can 
construct  $(x^{\pm}_i)_\hbar$ for $i=2,\dots, k$ and  $(x_i)_\hbar$ for $i=1,\dots, l$ similarly.
Since $\tilde{\mathcal{W}}^{\wedge}_{\hbar}/(\hbar)=S[[\tilde{S}_e]]$,  these elements generate $\tilde{\mathcal{W}}^{\wedge}_{\hbar}$ as $\mathbb{K}[[\hbar]]$-algebra.
They also lie in $(\tilde{\mathcal{W}}^{\wedge}_{\hbar})_{\mathbb{K}^*-l.f}$, since they are $\mathbb{K}^*$-homogeneous. We 
can take the PBW basis  as their image under the quotient map $(\tilde{\mathcal{W}}^{\wedge}_{\hbar})_{\mathbb{K}^*-l.f} \rightarrow \tilde{\mathcal{W}} $ 
given by specializing $\hbar$ to $1$.

Claim (2) follows directly from (1). 

Let $M$ be a $\mathbb{Z}_2$-graded simple quotient of $\Delta^{K}_{\mathcal{\tilde{W}}}(N)$ and  $\pi$ be the quotient homomorphism. By Theorem \ref{Classification Thm}  we may assume  $M$ has a $\mathbb{Z}$-grading with top degree $0$.  We claim  that $\pi$ has to be a $\mathbb{Z}$-graded  homomorphism.  
Otherwise,  for a non-zero $x \in N$, we may write $\pi(x)=\sum_{i=1}^n y_i$ for $\mathbb{Z}$-homogeneous $y_i \in M$, $i=1,2, \ldots n>1$. Suppose $\gr(y_1)=d<0$.  Since $\mathcal{\tilde{W}}_{+}^{\#}\cdot y_1=0$,  submodule $\mathcal{\tilde{W}}\cdot y_1$ has top degree $d<0$, so it is a proper sub-supermodule of simple supermodule $M$, a contradiction. Thus we have that any  maximal sub-supermodule of $\Delta^{K}_{\mathcal{\tilde{W}}}(N)$ is a $\mathbb{Z}$-graded submodule. Consequently  the sum of all the proper maximal sub-supermodules of  $\Delta^{K}_{\mathcal{\tilde{W}}}(N)$ is the unique proper maximal sub-supermodule. 
For any $g \in C_e$,  it is clear that  $ ^g L_{\mathcal{\tilde{W}}}^{K}(N)= L_{\mathcal{\tilde{W}}}^{K}( ^gN)$.
The  claim (3) follows.
\end{proof}

 The following corollary combined with the main result of \cite{Lo12}, gives us a complete classification of $\gr.\mathrm{Prim}(\tilde{\mathcal{W}})$ in the  type A case.
 \begin{corollary}\label{Coro4.2} 
	For a basic type \Rmnum{1} Lie superalgebra $\ggg$,  the sets 
	$$ \mathrm{Prim}(\mathcal{W}_0),\ \gr.\mathrm{Prim}(\tilde{\mathcal{W}}) \ \mbox{and} \  \gr.\mathrm{Prim}(\mathcal{W})$$
	are  bijective  with each other.	
\end{corollary}

\begin{proof}
We decent Letzter's bijection $\nu:\mathrm{Prim}(\mathcal{U}_0) \rightarrow  \mathrm{Prim}(\mathcal{U})$  to
 $$\nu_{\mathcal{\tilde{W}}}: \mathrm{Prim}(\mathcal{W}_0) \rightarrow  \gr.\mathrm{Prim}(\mathcal{\tilde{W}}).$$
For any $\mathcal{I} \in \gr.\mathrm{Prim}(\mathcal{\tilde{W}})$,
let $\hat{\mathcal{I}}$ be the preimage of $\mathcal{I}$ under the quotient
$\mathcal{\tilde{W}}_{+} \twoheadrightarrow \mathcal{W}_0$. We claim that 
there is a unique primitive ideal of $ \mathcal{\tilde{W}}$ lying over $\hat{\mathcal{I}}$.
We define $\nu_{\mathcal{\tilde{W}}}(\mathcal{I})$ to be this primitive ideal.
The claim, hence the corollary,  can be proved by  repeating the proof of Letzter's theorem \cite[Theorem 15.2.5]{Mu12} almost  word by word. In fact, we only need to replace  $R$ and $Q$ therein by $\mathcal{\tilde{W}}_{+}$ and $\mathcal{\tilde{W}}$.
 Proposition \ref{Triangular Decom} is used to verify the conditions of  \cite[Lemma 7.6.12]{Mu12}.
\end{proof}

\subsection{ Recall: the generalized Soergel functor $\mathbb{V}$ in the even theory}\label{sec4.2}
The present subsection is devoted to  recalling the results on category $\mathcal{O}$ and generalized Soergel functor $\mathbb{V}$ in \cite{Lo10b, Lo15}.
Choose a Levi subalgebra $(\ggg_{\bar{0}})_0 \subset \ggg_{\bar{0}}$, an $\mathfrak{sl}_2$-triple $(e,h,f) \subset (\ggg_{\bar{0}})_0$,  an integral element $\theta \in \mathfrak{z}((\ggg_{\bar{0}})_0)$ as in \cite[\S 2.6.1]{Lo15}. Recall that we have  used the grading $ \ggg_{\bar{0}}=\bigoplus_{i \in \mathbb{Z}}\ggg_{\bar{0}}(i)$ with respect to  $\ad(h)$ to define the W-algebra $\mathcal{W}_0$. We also need the grading $\ggg_{\bar{0}}=\bigoplus_{i \in \mathbb{Z}}(\ggg_{\bar{0}})_i$ with respect to  $\ad(\theta)$, where $(\ggg_{\bar{0}})_0 $ is exactly the Levi subalgebra introduced previously.
Let  $\mathfrak{p}$ be the parabolic subalgebra $\mathfrak{p}=(\ggg_{\bar{0}})_0(\geq 0) + (\ggg_{\bar{0}})_{>0}$.
Here $(\ggg_{\bar{0}})_0(\geq 0)$ (resp. $(\ggg_{\bar{0}})_{>0}$) stands for the subalgebra 
of $(\ggg_{\bar{0}})_0$ (resp. $\ggg_{\bar{0}}$)  generated by elements with non-negative
 (resp. positive) grading from $\ad(h)$ (resp. $\ad(\theta)$). Let $P$ be the corresponding  parabolic subgroup and $\mathcal{O}^{P}_\nu$ be the parabolic 
category $\mathcal{O}$ generated by finitely generated $(P,\nu)$-equivariant  $(\mathcal{U}_0,P)$-modules for a character $\nu$ of $\mathfrak{p}$.
Let $\mathfrak{t}=\mathfrak{z}((\ggg_{\bar{0}})_0)$ and $T \subset Q$ be the torus with $\mathrm{Lie}(T)=\mathfrak{t}$. Let $R$ stand for 
the centralizer of $T$ in $Q$.

View $\theta$ as an element of $\mathcal{W}_0$ by the embedding $\mathfrak{q} \hookrightarrow \mathcal{W}_0$.
 Let $ \mathcal{W}_0=\bigoplus_{\alpha \in \mathbb{Z}} (\mathcal{W}_0)_\alpha$ be the decomposition by eigenspaces of
 $\ad(\theta)$.  Set
 $$  (\mathcal{W}_0)_{\geq 0}=\bigoplus_{\alpha \geq 0}  (\mathcal{W}_0)_{\alpha}, (\mathcal{W}_0)_{ > 0}=\bigoplus_{\alpha > 0}  (\mathcal{W}_0)_{\alpha},
 (\mathcal{W}_0)_{ \geq 0}^{+}= (\mathcal{W}_0)_{ \geq 0} \cap  \mathcal{W}_0  (\mathcal{W}_0)_{> 0}.$$
 The following statements are main results of \cite{Lo10b}.
The quotient $(\mathcal{W}_0)^{0}:= (\mathcal{W}_0)_{ \geq 0}/(\mathcal{W}_0)_{ \geq 0}^{+}$ is isomorphic to the W-algebra arising from the pair $((\ggg_{\bar{0}})_0,e)$. For a finite dimensional simple $(\mathcal{W}_0)^{0}$-module $N$, define the \textit{Verma} module $\Delta_{\mathcal{W}_0}^{\theta}(N):=\mathcal{W}_0\otimes_{(\mathcal{W}_0)_{\geq 0}}N$. 
 Then $\Delta_{\mathcal{W}_0}^\theta(N)$ has a unique irreducible quotient $L_{\mathcal{W}_0}^\theta(N)$.
 Any finite dimensional irreducible $\mathcal{W}_0$-module can be obtained by this way. For a character $\nu$ of $R$,  $\mathcal{O}_{\theta} (\ggg_{\bar{0}},e)^{R}_\nu$ denotes the $(R,\nu)$-equivarinat category $\mathcal{O}$ defined for $\mathcal{W}_0$.

Let $\mathfrak{u}:= \mathfrak{p} \cap [f, \ggg_{\bar{0}}]$, which is a Lagrangian subspace of $V_{\bar{0}}$. Choose an $R \times \mathbb{K}^{*}$-equivariant embedding  
$\iota : V_{\bar{0}} \hookrightarrow \mathcal{U}_{0,\hbar}^{\wedge}$ as in \cite[\S 4.1.2]{Lo15}. We have an isomorphism  
\begin{equation} 
\Phi_{0,\hbar}: \mathbf{A}_{\hbar}^{\wedge}(V_{\bar{0}}) \otimes \mathcal{W}_{0,\hbar}^{\wedge} \longrightarrow \mathcal{U}_{0,\hbar}^{\wedge}	
\end{equation}
of quantum algebras from $\iota$  and  $(\mathcal{W}_{0,\hbar}^{\wedge})_{\mathbb{K}^*-l.f}/(\hbar-1)=\mathcal{W}_{0}$.

The generalized Soergel functor  $\mathbb{V}:\mathcal{O}^P_{\nu} \longrightarrow \mathcal{O}_{\theta} (\ggg_{\bar{0}},e)^{R}_\nu$ is defined by three different but equivalent ways in \cite{Lo15}. We recall the first one. For $M \in \mathcal{O}^P_\nu $,  let $M_{\hbar}^{\wedge_\chi}$ denotes the completion of Rees module $M_{\hbar}$ with respect to the
inverse image of the maximal ideal of $\chi$ under the homomorphism $(\mathcal{U}_{0})_{\hbar} \rightarrow S[\ggg_{\bar{0}}]$ given by $\hbar=0$. 
Let $M'_{\hbar} \subset  M_{\hbar}^{\wedge_\chi} $ be the annihilator of $\Phi_{0,\hbar}(\mathfrak{u})$. Then  $M'_{\hbar}$ is $\Phi_{0,\hbar}(\mathcal{W}_{0,\hbar}^{\wedge})$-stable, because $\Phi_{0,\hbar}(\mathcal{W}_{0,\hbar}^{\wedge})$ commutes with $\Phi_{0,\hbar}(\mathbf{A}_{\hbar}^{\wedge}(V_{\bar{0}})) \supset \Phi_{0,\hbar}(\mathfrak{u})$.   The generalized Soergel functor $\mathbb{V}$ is defined 
as follows 
$$\mathbb{V}(M):=(M'_{\hbar})_{\mathbb{K}^*-l.f}/(\hbar-1).$$
 There is  a rational action of $R$ on $\mathbb{V}(M)$ by the construction.  For the simple module $L(\lambda) \in \mathcal{O}^P_\nu$, we have  

\begin{equation}\label{equ4.1}
\mathbb{V}(L(\lambda))= \bigoplus_{i \in I_{\lambda}} L_{\mathcal{W}_0}^{\theta}({N^{0}_i}).	
\end{equation}
Here $L_{00}(\lambda)$ stands for the  finite dimensional $(\ggg_{\bar{0}})_{0}$-module with highest weight $\lambda$  and $N^{0}_i$ for $i \in I_{\lambda}$ run over the finite dimensional simple modules of $(\mathcal{W}_0)^{0}$ lying over $J_0(\lambda)=\Ann(L_{00}(\lambda))$.
 For simplicity, we write $L_{\mathcal{W}_0}^{\theta}({N^{0}_i})$ instead of $N_i$, $i \in I_{\lambda}$.

\subsection{Description of $\mathbb{V}(\widehat{L}(\lambda))$ for $\lambda \in \Lambda_{\mathfrak{p}}$}

Denote by $\mathcal{O}^P_\nu(\mathcal{U})$  the category of $\ggg$-supermodules lying in parabolic category  $\mathcal{O}^P_{\nu}$ for $\ggg_{\bar{0}}$.  Similarly, let $\mathcal{O}_{\theta} (\ggg_{\bar{0}},e)^{R}_{\nu}(\tilde{\mathcal{W}})$ be the category of $\tilde{\mathcal{W}}$-modules lying in $\mathcal{O}_{\theta}(\ggg_{\bar{0}},e)^{R}_{\nu}$.

The forthcoming Lemma \ref{Sky2} and Theorem \ref{Sorgel functor} follow from  \cite[\S 6.3.1]{Lo15}, which are given in the more general setting of Dixmier algebras.  For reader's convenience, we give a brief explanation in our special case.

Let  $\mathrm{Wh}(\ggg_{\bar{0}},e)^{R}_{\nu}$ be the category of $R$-equivariant  generalized Whittaker modules 
defined in \cite[\S 3.2.3]{Lo15}. This Whittaker category is similar to the one considered  in 
Theorem \ref{Skrabin equi thm}. It is defined by a nilpotent Lie subalgebra of $\ggg_{\bar{0}}$ different 
from $\mathfrak{m}_{\bar{0}}$. Let $\mathrm{Wh}(\ggg_{\bar{0}},e)^{R}_{\nu}(\mathcal{U})$ stand for the category of  $\ggg$-supermodules lying in $\mathrm{Wh}(\ggg_{\bar{0}},e)^{R}_{\nu}$. There is a generalized Skryabin's equivalence 
$ \mathcal{K}: \mathrm{Wh}(\ggg_{\bar{0}},e)^{R}_{\nu} \rightarrow \mathcal{O}_{\theta} (\ggg_{\bar{0}},e)^{R}_{\nu} $ with inverse $\mathcal{K}^{-1}$ ; see \cite[\S4]{Lo15} for the definition. It is easy to know that $\mathcal{K}$ sends $\mathcal{U}$-supermodules in $\mathrm{Wh}(\ggg_{\bar{0}},e)^{R}_{\nu}$
to  $\mathcal{\tilde{W}}$-supermodules in $\mathcal{O}_{\theta} (\ggg_{\bar{0}},e)^{R}_{\nu}$. The following lemma is an analog of Theorem \ref{Skrabin equi thm} and can be proved similarly.
\begin{lemma}\label{Sky2}
 There is a natural functor from   $\mathcal{O}_{\theta} (\ggg_{\bar{0}},e)^{R}_{\nu}(\tilde{\mathcal{W}})$ to  $\mathrm{Wh}(\ggg_{\bar{0}},e)^{R}_{\nu}(\mathcal{U})$
 induced by  $\mathcal{K}^{-1}$. 
\end{lemma}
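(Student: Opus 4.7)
The plan is to adapt the strategy used in the proof of Theorem \ref{Skrabin equi thm}. Write $\mathfrak{n}_{\bar{0},\chi} \subset \mathcal{U}_0$ for the twisted nilpotent Lie subalgebra employed in \S 3.2.3 of \cite{Lo15} to define $\mathrm{Wh}(\ggg_{\bar{0}},e)^{R}_{\nu}$---this plays the role that $\mathfrak{m}_{\bar{0},\chi}$ plays in Theorem \ref{Skrabin equi thm}---and let $\mathfrak{n}_{\bar{0}}$ denote the underlying Lie subalgebra of $\ggg_{\bar{0}}$. The first step is to establish a topological $R \times \mathbb{K}^{*}$-equivariant isomorphism
\[
\mathcal{U}^{\wedge}_{\mathfrak{n}_{\bar{0},\chi}} \longrightarrow \mathbf{A}_{V_{\bar{0}}}(\tilde{\mathcal{W}})^{\wedge}_{\mathfrak{n}_{\bar{0}}},
\]
extending the even analog proved in \cite{Lo10}, where $\mathbf{A}_{V_{\bar{0}}}(\tilde{\mathcal{W}})=\mathbf{A}(V_{\bar{0}})\otimes \tilde{\mathcal{W}}$. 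This is obtained by passing $\tilde{\Phi}_{\hbar}$ of Proposition \ref{prop2.1} to Rees algebras, completing along the ideal generated by $\mathfrak{n}_{\bar{0}}$, and invoking the $Q\times\mathbb{K}^*$-equivariance afforded by Proposition \ref{prop2.1}(1).

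Given a $\tilde{\mathcal{W}}$-supermodule $N$ lying in $\mathcal{O}_{\theta}(\ggg_{\bar{0}},e)^{R}_{\nu}(\tilde{\mathcal{W}})$, the module $\mathcal{K}^{-1}(N)$ is, at the level of the even theory, identified with $\mathbb{K}[\mathfrak{n}_{\bar{0}}]\otimes N$ as an $\mathbf{A}(V_{\bar{0}})\otimes \mathcal{W}_0$-module, with $\mathbf{A}(V_{\bar{0}})$ acting on the polynomial factor (compare pp.~52 of \cite{Lo10} and the proof of Theorem \ref{Skrabin equi thm}). Extending the $\mathcal{W}_0$-action on $N$ to the full $\tilde{\mathcal{W}}$-action upgrades this to an $\mathbf{A}_{V_{\bar{0}}}(\tilde{\mathcal{W}})$-supermodule structure on $\mathbb{K}[\mathfrak{n}_{\bar{0}}]\otimes N$. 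Composing with the algebra homomorphism $\mathcal{U}\hookrightarrow \mathcal{U}^{\wedge}_{\mathfrak{n}_{\bar{0},\chi}}\longrightarrow\mathbf{A}_{V_{\bar{0}}}(\tilde{\mathcal{W}})^{\wedge}_{\mathfrak{n}_{\bar{0}}}$ from the first step then yields a $\mathcal{U}$-supermodule structure on $\mathcal{K}^{-1}(N)$ that manifestly extends the original $\mathcal{U}_0$-structure and remains locally $\mathfrak{n}_{\bar{0},\chi}$-nilpotent.

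It then remains to check the categorical compatibilities: the $R$-equivariance persists because the isomorphism of the first step is $R$-equivariant and $N$ carries a compatible $R$-action; the resulting $\mathcal{U}$-supermodule lies in $\mathrm{Wh}(\ggg_{\bar{0}},e)^{R}_{\nu}(\mathcal{U})$ since its restriction to $\mathcal{U}_0$ is by construction $\mathcal{K}^{-1}(N) \in \mathrm{Wh}(\ggg_{\bar{0}},e)^{R}_{\nu}$; and functoriality in $N$ is immediate from the functoriality of tensor product and completion. The main obstacle I anticipate is the bookkeeping in step one, specifically verifying that the $\mathcal{U}_0$-action extracted from the new $\mathcal{U}$-action coincides with the one supplied by the even Skryabin inverse; this should reduce to the observation that $\Phi_{0,\hbar}$ is the restriction of $\tilde{\Phi}_{\hbar}$ to $V_{\bar{0}}$, which is built into Proposition \ref{prop2.1}(1), so that the chain of inclusions $\mathbf{A}(V_{\bar{0}})\otimes\mathcal{W}_0 \hookrightarrow \mathbf{A}_{V_{\bar{0}}}(\tilde{\mathcal{W}})$ is compatible with $\mathcal{U}_0 \hookrightarrow \mathcal{U}$ after completion along $\mathfrak{n}_{\bar{0},\chi}$.
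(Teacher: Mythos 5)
Your approach is exactly what the paper intends: the paper's own proof is the one-line remark that the lemma is ``an analog of Theorem \ref{Skrabin equi thm} and can be proved similarly,'' and you carry out precisely that adaptation, replacing $\mathfrak{m}_{\bar{0},\chi}$ by the parabolic nilpotent subalgebra of \cite{Lo15} \S3.2.3, establishing the super analog of the completed decomposition $\mathcal{U}^{\wedge}\cong \mathbf{A}_{V_{\bar{0}}}(\tilde{\mathcal{W}})^{\wedge}$ from the $Q\times\mathbb{K}^*$-equivariance in Proposition \ref{prop2.1}(1), and then upgrading the even Skryabin inverse $\mathbb{K}[\mathfrak{n}_{\bar{0}}]\otimes N$ to a $\mathcal{U}$-supermodule. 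The compatibility check you flag at the end (that the restriction to $\mathcal{U}_0$ recovers the even $\mathcal{K}^{-1}$) is correctly reduced to $\tilde{\Phi}_{\hbar}|_{V_{\bar{0}}}$ being built from $\Phi_{0,\hbar}$, which is how $\tilde{\Phi}_{\hbar}$ is constructed in Proposition \ref{prop2.1}.
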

The following result is crucial to  describe the image of simple objects in $\mathcal{O}^P_\nu(\mathcal{U})$  under $\mathbb{V}$. 
\begin{theorem}\label{Sorgel functor}
The functor $ \mathbb{V} :  \mathcal{O}^P_{\nu} \longrightarrow \mathcal{O}_{\theta} (\ggg_{\bar{0}},e)^{R}_{\nu}$ sends
simple $\mathcal{U}$-supermodules to simple objects in $\mathcal{O}_{\theta} (\ggg_{\bar{0}},e)^{R}(\tilde{\mathcal{W}})$.
 \end{theorem}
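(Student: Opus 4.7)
The plan is to realize $\mathbb{V}$ as the composition of Skryabin's equivalence $\mathcal{K}$ of Lemma \ref{Sky2} with a Whittaker localization functor, and then to reduce simplicity of its output on $\widehat{L}(\lambda)$ to a Kac module argument combined with the even case \eqref{equ4.1} from \cite{Lo15}. The main ingredients will be the Skryabin equivalences of Theorem \ref{Skrabin equi thm} and Lemma \ref{Sky2}, the triangular decomposition of Proposition \ref{Triangular Decom}, and the compatibility of $\tilde{\Phi}_{\hbar}$ with $\Phi_{0,\hbar}$ supplied by Proposition \ref{prop2.1}.

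First I would upgrade $\mathbb{V}$ to accept $\mathcal{U}$-supermodule input. For $M\in\mathcal{O}^P_{\nu}(\mathcal{U})$, the subspace $M'_{\hbar}\subset M_{\hbar}^{\wedge_{\chi}}$ annihilating $\Phi_{0,\hbar}(\mathfrak{u})$ is in fact stable under $\tilde{\Phi}_{\hbar}(\tilde{\mathcal{W}}_{\hbar}^{\wedge})$, because this subalgebra commutes with $\tilde{\Phi}_{\hbar}(V_{\bar{0}})\supset\Phi_{0,\hbar}(\mathfrak{u})$ by Proposition \ref{prop2.1}. Taking the $\mathbb{K}^*$-finite part and reducing mod $\hbar-1$ then equips $\mathbb{V}(M)$ with a natural $R$-equivariant $\tilde{\mathcal{W}}$-supermodule structure extending its $\mathcal{W}_{0}$-module structure, and this structure is $\mathbb{Z}_{2}$-homogeneous since $\tilde{\Phi}_{\hbar}$ is a superalgebra homomorphism.

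Next I would identify $\mathcal{K}^{-1}\circ\mathbb{V}$ with a right-exact Whittaker localization $\mathcal{F}:\mathcal{O}^P_{\nu}(\mathcal{U})\to\mathrm{Wh}(\ggg_{\bar{0}},e)^{R}_{\nu}(\mathcal{U})$, mirroring Losev's realization of $\mathbb{V}$ in the even case. The compatibility $Q_{0,\chi}\otimes_{\mathcal{W}_{0}}N\cong\tilde{Q}_{\chi}\otimes_{\tilde{\mathcal{W}}}N$ from Theorem \ref{Skrabin equi thm} guarantees that $\mathcal{F}(M)$ is the $\ggg$-supermodule whose $\ggg_{\bar{0}}$-structure is Losev's even localization applied to $M|_{\ggg_{\bar{0}}}$ and whose odd action is inherited from $M$ through $\tilde{\Phi}_{\hbar}$. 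After this identification, the theorem is equivalent to showing $\mathcal{F}(\widehat{L}(\lambda))$ is simple in $\mathrm{Wh}(\ggg_{\bar{0}},e)^{R}_{\nu}(\mathcal{U})$.

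The hardest step will be establishing this simplicity. My approach is to exploit that $\widehat{L}(\lambda)$ is the unique simple $\mathbb{Z}_{2}$-graded quotient of the Kac module $K(L(\lambda))$, combined with the tensor decomposition $K(L(\lambda))\cong\Lambda(\ggg_{-1})\otimes L(\lambda)$ as $\ggg_{\bar{0}}$-modules. By right-exactness $\mathcal{F}(\widehat{L}(\lambda))$ is a quotient of $\mathcal{F}(K(L(\lambda)))$, and the latter is a ``super Verma'' built on the $R$-equivariantly simple semisimple $\mathcal{W}_{0}$-module $\mathcal{F}(L(\lambda))$ described in \eqref{equ4.1}. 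A highest-weight-style argument modelled on the proof of Proposition \ref{Triangular Decom}(3)---using the $\mathbb{Z}$-grading to force every maximal sub-supermodule to be $\mathbb{Z}$-graded, so that their sum is the unique proper maximal---identifies $\mathcal{F}(\widehat{L}(\lambda))$ with this unique simple quotient. The delicate point is verifying that $R$-equivariance survives this identification; I expect this to follow from the transitivity of the $C_{e}$-orbit on the summands $N_{i}$ of \eqref{equ4.1} together with Theorem \ref{mainthm}, which transports $R$-equivariant simplicity of the even Whittaker output to its super enlargement.
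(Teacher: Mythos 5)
There is a genuine gap at the decisive step. After writing $\mathcal{F}(\widehat{L}(\lambda))$ (equivalently $\mathbb{V}(\widehat{L}(\lambda))$) as a quotient of $\mathcal{F}(K(L(\lambda)))\cong\tilde{\mathcal{W}}\otimes_{\tilde{\mathcal{W}}_{+}}\mathbb{V}(L(\lambda))$, you assert that a highest-weight argument in the style of Proposition \ref{Triangular Decom}(3) ``identifies'' it with the unique simple quotient of that induced module. But being a quotient of a module with a unique simple quotient does not make a module simple: the image of $\widehat{L}(\lambda)$ could a priori be any intermediate quotient, since exactness of $\mathbb{V}$ only tells you it equals $\mathbb{V}(K(L(\lambda)))/\mathbb{V}(M)$ where $M$ is the maximal submodule of the Kac module, and nothing you have said forces $\mathbb{V}(M)$ to exhaust the maximal submodule on the $\tilde{\mathcal{W}}$-side. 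The grading trick in Proposition \ref{Triangular Decom}(3) shows the induced module has a unique simple graded quotient; it does not show that the particular quotient you obtained is that one. Your fallback appeal to Theorem \ref{mainthm} and $C_e$-transitivity concerns equivariance of the ideals/simples and cannot repair this either.

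The paper closes exactly this gap with a different, and essentially unavoidable, ingredient that your proposal never constructs: the right adjoint $\mathbb{V}^{*}$ of $\mathbb{V}$ from Proposition 4.4 of \cite{Lo15}. The paper's proof is short precisely because it only needs to check (via Lemma \ref{Sky2}, i.e.\ the restricted Skryabin equivalence you do invoke, and by tracking Losev's construction of $\mathbb{V}^{*}$ through $\mathcal{K}^{-1}$) that the adjoint pair $(\mathbb{V},\mathbb{V}^{*})$ restricts to the supermodule categories $\mathcal{O}^P_{\nu}(\mathcal{U})$ and $\mathcal{O}_{\theta}(\ggg_{\bar{0}},e)^{R}_{\nu}(\tilde{\mathcal{W}})$; simplicity of $\mathbb{V}$ applied to a simple supermodule then follows by the standard adjunction argument, with $R$-equivariance built into the construction rather than imported from Theorem \ref{mainthm}. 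Note also that your proposal effectively proves (a weakened form of) Theorem \ref{Thm sger func} rather than Theorem \ref{Sorgel functor}: in the paper the description $\mathbb{V}(\widehat{L}(\lambda))=\bigoplus_i L^{K}_{\tilde{\mathcal{W}}}(N_i)$ is deduced \emph{from} the simplicity statement together with \eqref{equ4.1} and Proposition \ref{Triangular Decom}(3), so using that description (or the Kac-module filtration) to prove simplicity runs the logic backwards unless you supply the adjoint or some substitute for it.
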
 

\begin{proof}
By construction, we have that $\mathbb{V}$ restricts to 
a functor  from  $\mathcal{O}^P_{\nu}(\mathcal{U})$ to $ \mathcal{O}_{\theta} (\ggg_{\bar{0}},e)^{R}_{\nu}(\tilde{\mathcal{W}})$.	
Let $\mathbb{V}^{*}: (\ggg_{\bar{0}},e)^{R}_{\nu} \rightarrow \mathcal{O}^P_{\nu} $ be the right adjoin functor of $\mathbb{V}$ defined in \cite[Proposition 4.4]{Lo15}. The construction (precisely the last paragraph of pp.898 \cite{Lo15}) of  $\mathbb{V}^*$ conjunction with Lemma \ref{Sky2} implies that $\mathbb{V}^*$ sends $\tilde{\mathcal{W}}$-supermodules to $\mathcal{U}$-supermodules.  Furthermore,  $\mathbb{V}^*$ is restricted to a functor
 $ \mathcal{O}_{\theta} (\ggg_{\bar{0}},e)^{R}_{\nu}(\mathcal{\tilde{W}}) \rightarrow \mathcal{O}^P_{\nu}(\mathcal{U})$, which is the right adjoint to the restriction of $\mathbb{V}$. 
\end{proof}

\begin{theorem} \label{Thm sger func}
	For  $\lambda \in \Lambda_{\mathfrak{p}}$, recall that $N_i$, $i \in I_\lambda$  stand for the  simple $\mathcal{W}_0$-modules appearing in \eqref{equ4.1}.
Then we have 
$$\mathbb{V}(\widehat{L}(\lambda))=\bigoplus_{i \in I_\lambda} L^{K}_{\tilde{\mathcal{W}}}(N_i).$$
\end{theorem}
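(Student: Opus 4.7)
The plan is to realize both $\mathbb{V}(\widehat{L}(\lambda))$ and $\bigoplus_i L^K_{\tilde{\mathcal{W}}}(N_i)$ as the (equivariant) simple quotient of a common Kac-type induced object obtained by applying $\mathbb{V}$ to the Kac module $K(L(\lambda))$. Since $\widehat{L}(\lambda)$ is, by definition, the unique $\mathbb{Z}_2$-graded simple quotient of $K(L(\lambda))$, applying $\mathbb{V}$ and using its right-exactness (immediate from its definition via completion and taking the annihilator of $\Phi_{0,\hbar}(\mathfrak{u})$) yields a surjection $\mathbb{V}(K(L(\lambda))) \twoheadrightarrow \mathbb{V}(\widehat{L}(\lambda))$.

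The key computation is the identification
\begin{equation*}
\mathbb{V}(K(L(\lambda))) \;\cong\; \tilde{\mathcal{W}} \otimes_{\tilde{\mathcal{W}}_+} \mathbb{V}(L(\lambda)) \;=\; \bigoplus_{i \in I_\lambda} V^K_{\tilde{\mathcal{W}}}(N_i),
\end{equation*}
where the second equality uses \eqref{equ4.1} together with the definition of $V^K_{\tilde{\mathcal{W}}}(N_i)$ in Proposition \ref{Triangular Decom}(3). To establish the first equality, the idea is that the Kac induction $\mathcal{U}\otimes_{\mathcal{U}(\ggg_0+\ggg_1)}(-)$ is compatible with the $\mathbb{Z}$-grading from $\ad h$, and after passing to the Rees/completion picture the restriction $\tilde{\Phi}_\hbar^+$ recalled at the start of \S \ref{sec cher for} matches the inclusion $(\mathcal{U}_+)_\hbar^\wedge \subset \mathcal{U}_\hbar^\wedge$ with the inclusion $\tilde{\mathcal{W}}_{+,\hbar}^\wedge \subset \tilde{\mathcal{W}}_\hbar^\wedge$ after stripping off the common Weyl-superalgebra factor. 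The Lagrangian $\mathfrak{u}\subset V_{\bar 0}$ defining $\mathbb{V}$ is handled uniformly on both sides, so the annihilator operation commutes with the induction; passing to $\mathbb{K}^*$-locally finite vectors modulo $(\hbar-1)$ then yields the displayed factorization. Given this, Theorem \ref{Sorgel functor} tells us that $\mathbb{V}(\widehat{L}(\lambda))$ is a simple object of $\mathcal{O}_\theta(\ggg_{\bar 0},e)^R(\tilde{\mathcal{W}})$, hence coincides with the unique equivariant simple quotient of $\bigoplus_i V^K_{\tilde{\mathcal{W}}}(N_i)$. By Proposition \ref{Triangular Decom}(3) each $V^K_{\tilde{\mathcal{W}}}(N_i)$ has unique simple quotient $L^K_{\tilde{\mathcal{W}}}(N_i)$ and the map $N\mapsto L^K_{\tilde{\mathcal{W}}}(N)$ is $C_e$-equivariant; combined with the transitivity of the $R$-action on $I_\lambda$ already present in the even theory of \cite{Lo15}, this forces $\mathbb{V}(\widehat{L}(\lambda)) = \bigoplus_{i\in I_\lambda} L^K_{\tilde{\mathcal{W}}}(N_i)$.

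The main obstacle, in my view, is justifying the intertwining of $\mathbb{V}$ with the Kac induction, i.e.\ the first equality in the displayed formula above. One must track how the Rees-module completion at $\chi$ and the annihilator of $\Phi_{0,\hbar}(\mathfrak{u})$ interact with the non-trivial $U(\ggg_{-1})$-factor that appears in $K(L(\lambda))$ upon restriction to $\ggg_{\bar 0}$, so that after passing to $\mathbb{K}^*$-locally finite vectors one genuinely recovers $\tilde{\mathcal{W}}\otimes_{\tilde{\mathcal{W}}_+}(-)$ rather than a twisted variant. A careful bookkeeping of the $\mathbb{Z}$-grading on both sides, using the PBW-type basis of Proposition \ref{Triangular Decom}(1) and the Skryabin-type equivalence of Theorem \ref{Skrabin equi thm} to re-express the annihilator construction in a manifestly induction-compatible form, should close this gap without introducing essentially new machinery.
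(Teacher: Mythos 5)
Your argument runs in the opposite direction from the paper's. The paper argues from below: since $L(\lambda)$ sits inside $\widehat{L}(\lambda)$ (it is the image of the degree-zero piece of the Kac module, hence is killed by the positive half of the type \Rmnum{1} grading) and $\mathbb{V}$ is exact, one gets $\mathbb{V}(L(\lambda))=\bigoplus_{i\in I_\lambda}N_i$ as a subobject of $\mathbb{V}(\widehat{L}(\lambda))$ on which the nilpotent half used to induce the Verma modules acts trivially; each $N_i$ therefore generates inside $\mathbb{V}(\widehat{L}(\lambda))$ a quotient of $V^{K}_{\tilde{\mathcal{W}}}(N_i)$, and Theorem \ref{Sorgel functor} (equivariant simplicity of $\mathbb{V}(\widehat{L}(\lambda))$) together with Proposition \ref{Triangular Decom}(3) then yields the decomposition at once. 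You argue from above, through the surjection $K(L(\lambda))\twoheadrightarrow\widehat{L}(\lambda)$, which forces you to evaluate $\mathbb{V}$ on the Kac module. What the paper's route buys is precisely the avoidance of that computation; what your route would buy, if completed, is the stronger and independently interesting statement $\mathbb{V}(K(L(\lambda)))\cong\bigoplus_{i}V^{K}_{\tilde{\mathcal{W}}}(N_i)$, i.e.\ compatibility of $\mathbb{V}$ with Kac induction.

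That compatibility is exactly where your proposal has a genuine gap, as you acknowledge. ``Stripping off the common Weyl factor'' via $\tilde{\Phi}^{+}_{\hbar}$ does not by itself show that the $\chi$-adic completion of the Rees module of $K(L(\lambda))$ and the passage to the annihilator of $\Phi_{0,\hbar}(\mathfrak{u})$ commute with induction: restricted to $\ggg_{\bar 0}$ the Kac module carries the extra factor $\bigwedge(\ggg_{-1})$, and controlling its interaction with the completion is the whole point. A cleaner way to close the gap than grading bookkeeping is Losev's compatibility of $\mathbb{V}$ with Harish-Chandra bimodules (Theorem 4.8 of \cite{Lo15}): write $K(L(\lambda))=X\otimes_{\mathcal{U}_0}L(\lambda)$ with $X=\mathcal{U}\otimes_{\mathcal{U}_{+}}\mathcal{U}_0\cong\bigwedge(\ggg_{-1})\otimes\mathcal{U}_0$ a HC $\mathcal{U}_0$-bimodule, so that $\mathbb{V}(K(L(\lambda)))=X_{\dag}\otimes_{\mathcal{W}_0}\mathbb{V}(L(\lambda))$, and then identify $X_{\dag}$ with $\tilde{\mathcal{W}}\otimes_{\tilde{\mathcal{W}}_{+}}\mathcal{W}_0$ using Theorem \ref{them1} and Proposition \ref{Triangular Decom}; this still requires an argument, but a finite and standard one. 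Two smaller corrections: exactness of $\mathbb{V}$ is a theorem of \cite{Lo15}, not an immediate consequence of ``taking annihilators'' (that operation alone is only left exact; one needs the splitting of the completed module into a Weyl-algebra part and a $\mathcal{W}_0$-part), and after obtaining the surjection onto $\mathbb{V}(\widehat{L}(\lambda))$ you do need, as you indicate, the $R$-equivariance permuting the summands $N_i$ transitively to rule out a quotient that kills some of the $V^{K}_{\tilde{\mathcal{W}}}(N_i)$.
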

\begin{proof}
	
Since $L(\lambda) \subset \widehat{L}(\lambda)$, 
it follows that 
$$\bigoplus_{i} N_i \subset \mathbb{V}(\widehat{L}(\lambda)).$$
Note that the action of $\tilde{\mathcal{W}}_{+}^{\#}$ on $N_i$ for $i \in I_\lambda$ is trivial.
Now the theorem follows from  Proposition \ref{Triangular Decom} (3) and Theorem \ref{Sorgel functor}.
\end{proof}	
	
The following result implies that the action of $C_e$ on  $\gr.\mathrm{Irr}^{\mathrm{fin}}_{\lambda}(\tilde{\mathcal{W}})$ is transitive. 
\begin{corollary}\label{coro4.5}	
	For $\lambda \in \Lambda_{\mathfrak{p}}$,  the map  $ \mathrm{Irr}_{\lambda}^{\mathrm{fin}}(\mathcal{W}_0) \rightarrow \gr.\mathrm{Irr}_{\lambda}^{\mathrm{fin}}(\tilde{\mathcal{W}}):  N \mapsto L_{\mathcal{\tilde{W}}}^{K}(N)$ is bijective and  $C_e$-equivariant.
\end{corollary}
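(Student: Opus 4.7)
The plan is to build on Proposition \ref{Triangular Decom}(3), Theorem \ref{Thm sger func}, and Theorem \ref{mainthm}. Proposition \ref{Triangular Decom}(3) already provides a $C_e$-equivariant bijection $\mathrm{Irr}^{\mathrm{fin}}(\mathcal{W}_0) \to \gr.\mathrm{Irr}^{\mathrm{fin}}(\tilde{\mathcal{W}})$ given by $N \mapsto L^K_{\tilde{\mathcal{W}}}(N)$, so the whole statement reduces to checking that this bijection restricts to the $\lambda$-subsets on either side.

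For the forward containment $L^K_{\tilde{\mathcal{W}}}(\mathrm{Irr}_\lambda(\mathcal{W}_0)) \subseteq \gr.\mathrm{Irr}_\lambda(\tilde{\mathcal{W}})$, I would appeal to Theorem \ref{Thm sger func}. By Losev's version of Premet's conjecture applied to $\mathcal{W}_0$, the collection $\{N_i\}_{i \in I_\lambda}$ of simple $\mathcal{W}_0$-modules occurring in the decomposition of $\mathbb{V}(L(\lambda))$ exhausts $\mathrm{Irr}_\lambda(\mathcal{W}_0)$. Theorem \ref{Thm sger func} then exhibits each $L^K_{\tilde{\mathcal{W}}}(N_i)$ as a simple summand of $\mathbb{V}(\widehat{L}(\lambda))$. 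Since $\widehat{L}(\lambda)$ is annihilated by $\widehat{J}(\lambda)$ and the construction of $\mathbb{V}$ (Rees completion followed by restriction to the annihilator of $\Phi_{0,\hbar}(\mathfrak{u})$) is compatible with the descending map $\bullet_{\tilde{\dag}}$, one obtains that $\mathbb{V}(\widehat{L}(\lambda))$ is annihilated by $\widehat{J}(\lambda)_{\tilde{\dag}}$. Hence every $L^K_{\tilde{\mathcal{W}}}(N_i)$ lies over $\widehat{J}(\lambda)$, i.e., belongs to $\gr.\mathrm{Irr}_\lambda(\tilde{\mathcal{W}})$.

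Surjectivity then follows from Theorem \ref{mainthm}: both $\mathrm{Irr}_\lambda(\mathcal{W}_0)$ and $\gr.\mathrm{Irr}_\lambda(\tilde{\mathcal{W}})$ are single $C_e$-orbits, and the restricted map is $C_e$-equivariant with nonempty image, so it must cover the entire target orbit. Injectivity is inherited directly from the global bijection of Proposition \ref{Triangular Decom}(3), and $C_e$-equivariance is likewise inherited.

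The main obstacle I anticipate is the middle step --- rigorously tracking annihilators through $\mathbb{V}$ in the super setting to confirm that $\mathbb{V}(\widehat{L}(\lambda))$ lies over $\widehat{J}(\lambda)_{\tilde{\dag}}$. Morally this is the assertion that $\mathbb{V}$ intertwines the $\bullet^{\tilde{\dag}}$/$\bullet_{\tilde{\dag}}$ correspondence between $\mathcal{U}$ and $\tilde{\mathcal{W}}$, and I expect it to follow by adapting Losev's analysis from \S4 of \cite{Lo15} via the super Skryabin equivalence in Lemma \ref{Sky2}.
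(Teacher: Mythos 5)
Your proposal is correct and follows essentially the same route as the paper's own proof: Theorem \ref{Thm sger func} to place the modules $L^{K}_{\tilde{\mathcal{W}}}(N_i)$ in $\gr.\mathrm{Irr}_{\lambda}(\tilde{\mathcal{W}})$, the $C_e$-equivariant bijection of Proposition \ref{Triangular Decom}(3), and the single-orbit statements coming from Premet's conjecture (the paper invokes these via \S\ref{sec2.5}) to get surjectivity onto the $\lambda$-subsets. The annihilator-tracking step you flag as the main obstacle is likewise left implicit in the paper, which simply reads membership in $\gr.\mathrm{Irr}^{\mathrm{fin}}_{\lambda}(\tilde{\mathcal{W}})$ off from Theorem \ref{Thm sger func}, so your version is if anything slightly more explicit on that point.
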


\begin{proof}
	
The main  result of \cite{Lo14}  states that
$$\mathrm{Irr}_{\lambda}^{\mathrm{fin}}(\mathcal{W}_0)=\{ ^g N \mid g \in C_e,  N=N_i \  \mbox{for some} \ i \in I_{\lambda} \}.$$	
Theorem \ref{Thm sger func} implies that  $L_{\mathcal{\tilde{W}}}^{K}(N) \in \gr.\mathrm{Irr}^{\mathrm{fin}}_{\lambda}(\tilde{\mathcal{W}})$. 
Hence we have 
$\mathrm{Irr}_{\lambda}^{\mathrm{fin}}(\tilde{\mathcal{W}})=\{ ^g L_{\mathcal{\tilde{W}}}^{K}(N) \mid g \in C_e\}$	
by \S \ref{sec2.5} and  Proposition \ref{Triangular Decom} (3). 

\end{proof}

 \subsection{ On $\ggg_{\bar{0}}$-rough structure of $\ggg$-supermodules} \label{sec4.5}

To compute the characters of $\mathcal{W}$-supermodules,
we need the expression  
\begin{equation}\label{equ 4.4}
\widehat{L}(\lambda)=\sum_{i \in S_{\lambda}} c_{i\lambda} \Delta_{P} (\lambda_i)
\end{equation} 
in the Grothendick group $K(\mathcal{O}^P_{\nu})$  of  the equivariant parabolic category $\mathcal{O}^P_{\nu}$ for $\ggg_{\bar{0}}$. 
Here $\Delta_{P}(\lambda_i)$ stands for the Verma module in $\mathcal{O}^P_{\nu}$ with the highest weight $\lambda_i$.
The coefficients  $c_{i\lambda}$ can be obtained from the $\ggg_{\bar{0}}$-rough structure of simple $\ggg$-modules by the following two ways.  

We may view $\hat{L}(\lambda)$ as a $\ggg_{\bar{0}}$-module and assume that 
$$\widehat{L}(\lambda)=\sum d_{\lambda\mu_i} L(\mu_i)$$
in $K(\mathcal{O}^P_{\nu})$. 
Here the coefficients $d_{\lambda\mu_i}$ are the multiplicities of $L(\mu_i)$ in $\widehat{L}(\lambda)$. 
However in general, it is still open  to determine  $d_{\lambda\mu_i}$. 
It can be computed by the Kazhdan-Lusztig theory of  Lie algebras when $\ggg=\mathfrak{gl}(m|n)$ and $\lambda$ is typical; see \cite{CM}.
For the recent progress on the rough structures for type \Rmnum{1} Lie superalgebras and their applications,  see also \cite{CCC,CCM,Ch}. Thus we can determine the coefficients  $c_{i\lambda}$ by Kazadan Lusztig theory of $\mathcal{O}^P_{\nu}$.

The coefficients  $c_{i\lambda}$  may  also be determined by the super parabolic Kazhdan-Lusztig theory which is still open in general presently.  Let 
 $\hat{\mathfrak{p}}$ be the parabolic sub-superalgebra $\mathfrak{p}+\ggg_{\bar{1}}(\geq 0)$, where 
 $\ggg_{\bar{1}}(\geq 0)$ defined by the similar way as above.
 Suppose that 
 $$\widehat{L}(\lambda)=\sum_{i \in \hat{S}_{\lambda}} \hat{c}_{i\lambda} \hat{\Delta}_{\hat{\mathfrak{p}}} (\lambda_i)$$
 in the Grothendick group of the super parabolic category $\mathcal{O}_{\hat{\mathfrak{p}}}$ for $\hat{\mathfrak{p}}$, where $\hat{\Delta}_{\hat{\mathfrak{p}}} (\lambda_i)$ is the parabolic Verma module in  $\mathcal{O}_{\hat{\mathfrak{p}}}$.   A filtration of Verma
 modules of Lie superalgebras by that of Lie algebras was given in \cite[Theorem 3.2]{Mu97b}. Generalizing this result to the parabolic case, we may find the coefficients $c_{i\lambda}$ in \eqref{equ 4.4}.
  
 \subsection{Algorithm for character formulas} \label{sec4.5}
Now we present our algorithm to compute the characters of modules in $\mathrm{Irr}_{\lambda}^{\mathrm{fin}}(\mathcal{W})$ for  $\lambda \in \Lambda_{\mathfrak{p}}$.
It was obtained in \cite[ Theorem 4.8 (\rmnum{4})]{Lo15} that 
\begin{equation}
 \Ch(\mathbb{V}(\Delta_{P}(\mu)))=\dim(L_{00}(\mu))e^{\mu-\rho} \prod_{i=1}^{k}(1-e^{\mu_i})^{-1}.
\end{equation} 
Here $\mu_i$, $i=1,2,\ldots, k$ are  the weights of $\mathfrak{t}$
in $(\ggg_{\bar{0}})_{<0} \cap \mathfrak{z}_{\ggg_{\bar{0}}}(e)$, $\rho$ is  the half of  sum of all the positive roots of $\ggg_{\bar{0}}$.
Applying  $\mathbb{V}$ to the both sides of \eqref{equ 4.4} and by \cite[Theorem 4.8]{Lo15}, we have 
\begin{equation}
 \Ch (\mathbb{V}(\widehat{L}(\lambda)))=\sum_{i \in S_{\lambda}} c_{i\lambda} \dim(L_{00}(\lambda_i))e^{\lambda_i-\rho} \prod_{i=1}^{k}(1-e^{\mu_i})^{-1}. 
\end{equation} 

Thanks to  Theorem \ref{Thm sger func}, we have that $\mathbb{V}(\widehat{L}(\lambda))$ is the direct sum of $|I_{\lambda}|$  simple $\mathcal{\tilde{W}}$-supermodules.
 These supermodules are transitive under the twist action of $Q_0/Q_0^\circ$ , where $Q_0$ is the centralizer of $\mathfrak{sl}_2$-triple $\{e,h,f\}$ in $(G_{\bar{0}})_0$.
 Note that we consider the character with respect to the torus $ \mathfrak{t}=\mathfrak{z}((\ggg_{\bar{0}})_0)$. Therefore they have the same characters. Thus 
 \begin{equation}
 	\Ch(L^{K}_{\tilde{\mathcal{W}}}(N_i))=|I_{\lambda}|^{-1}\sum_{i \in I_{\lambda}} c_{i\lambda} \dim(L_{00}(\lambda_i))e^{\lambda_i-\rho} \prod_{i=1}^{k}(1-e^{\mu_i})^{-1}.
\end{equation}

Now by \S \ref*{sec2.5} and Corollary \ref{coro4.5},  we obtain a character formula for all $M \in \gr.\mathrm{Irr}_\lambda^{\mathrm{fin}}(\mathcal{\tilde{W}})$. 
Note that by definition there is an   embedding $ \mathfrak{t} \hookrightarrow \mathcal{W} \hookrightarrow \mathcal{\tilde{W}}$. Proposition \ref{prop2.5} now yields
  $$ \Ch(L^{K}_{\tilde{\mathcal{W}}}(N_i)')=\Ch(L^{K}_{\tilde{\mathcal{W}}}(N_i))\prod_{i=1}^{l}(1+e^{\mu_i'})^{-1}.$$
Here   $(L^{K}_{\tilde{\mathcal{W}}}(N_i))'$ is the simple $\mathcal{W}$-supermodule obtained from $L^{K}_{\tilde{\mathcal{W}}}(N_i)$ (see  Proposition \ref{prop2.5}
) and $\mu_i',i=1,2,\ldots, l$  are the weights of the Lagrangian $\mathfrak{u}_{\bar{1}}^{*}$.

By Proposition \ref{Triangular Decom} and \cite{Lo15}, in order to compute  the characters of modules in $\gr.\mathrm{Irr}^{\mathrm{fin}}_{\lambda}(\mathcal{W})$,
we only need to determine the coefficients $c_{i\lambda}$ in
\eqref{equ 4.4}. This is a fundamental problem in the representation theory of  
Lie superalgebras.

\section*{Acknowledgements}
The author is partially supported by NFSC (grant No.11801113) and  RIMS, an international joint usage/research center located in Kyoto University. This work is motivated by communications with Tomuyuki Arakawa and a part of it is written during the author's visit to him at RIMS. The author is indebted much to him for many helpful discussions. He thanks Hao Chang for the tremendous help in improving the language.  He also thanks for the helpful communications from Bin Shu and comments from Yang Zeng. Finally, the author would like to express his gratitude to the editors and referees for their numerous valuable comments.

\end{document}